\newcommand{\QQ}{{\mathbb Q}}
\newcommand{\FF}{{\mathbb F}}
\newcommand{\ZZ}{{\mathbb Z}}
\newcommand{\CC}{{\mathbb C}}
\newcommand{\Gal}{\mathrm{Gal}}
\newcommand{\Tr}{\mathrm{Tr}}
\newcommand{\cE}{{\mathcal E}}
\newcommand{\cL}{{\mathcal L}}
\newcommand{\bP}{{\bf P}}
\newcommand{\bG}{{\bf G}}
\newcommand{\bH}{{\bf H}}
\newcommand{\bT}{{\bf T}}
\newcommand{\bB}{{\bf B}}
\newcommand{\bU}{{\bf U}}
\newcommand{\bL}{{\bf L}}
\newcommand{\ex}{\mathrm{e}}
\newcommand{\Nr}{\mathrm{N}}
\newcommand{\Ind}{\mathrm{Ind}}
\newtheorem{theorem}{Theorem}[section]
\newtheorem{lemma}{Lemma}[section]
\newtheorem{proposition}{Proposition}[section]
\newtheorem{corollary}{Corollary}[section]
\newtheorem*{thm}{Theorem}
\def\adots{\mathinner{\mkern2mu\raise0pt\hbox{.}  
\mkern2mu\raise4pt\hbox{.}\mkern1mu
\raise7pt\vbox{\kern7pt\hbox{.}}\mkern1mu}}
\numberwithin{equation}{section}
\begin{document}

\bibliographystyle{ieeetr}

\title[Galois group action and Jordan decomposition]
{Galois group action and Jordan decomposition of characters of finite reductive groups with connected center}
\author{Bhama Srinivasan}
  \address{Department of Mathematics, Statistics, and Computer Science (MC 249)\\
           University of Illinois at Chicago\\
           851 South Morgan Street\\
           Chicago, IL  60680-7045}
  \email{srinivas@uic.edu}
\author{C. Ryan Vinroot}
  \address{Department of Mathematics\\
           College of William and Mary\\
           P. O. Box 8795\\
           Williamsburg, VA  23187-8795}
   \email{vinroot@math.wm.edu}

\begin{abstract}  Let $\bG$ be a connected reductive group with connected center defined over $\FF_q$, with Frobenius morphism $F$.  Given an irreducible complex character $\chi$ of $\bG^F$ with its Jordan decomposition, and a Galois automorphism $\sigma \in \Gal(\overline{\QQ}/\QQ)$, we give the Jordan decomposition of the image ${^\sigma \chi}$ of $\chi$ under the action of $\sigma$ on its character values.\\
\\
2010 {\it AMS Subject Classification:}  20C33
\end{abstract}

\maketitle

\section{Introduction}

If $G$ is a finite group, the problem of understanding the action of the absolute Galois group on the irreducible characters of $G$ is a natural one.  The problem also has useful applications, an interesting example being a conjecture of G. Navarro \cite{Na04} which is a refinement of the McKay conjecture to take into account the Galois action on characters.  In particular, it is an important problem to understand the action of the Galois group on the irreducible characters of finite groups of Lie type, see \cite{ScTa18} for example, where the conjecture of Navarro is checked to hold for certain groups of Lie type.

In this paper, we describe the action of the Galois group on the irreducible characters of finite reductive groups with connected center, in terms of the Jordan decomposition of characters.  This is a generalization of our results from a previous paper \cite{SrVi15} where we accomplish this for the action of complex conjugation, and so describe the real-valued characters in terms of the Jordan decomposition.  Our main result may be stated as follows.
 
\begin{thm} [Theorem \ref{MainThm}]  Let $\bG$ be a connected reductive group with connected center, defined over $\FF_q$ with Frobenius morphism $F$.  Let $m$ be the exponent of $\bG^F$, and $\sigma \in \Gal(\QQ(\zeta_m)/\QQ)$ where $\zeta_m$ is a primitive $m$th root of unity, with $\sigma(\zeta_m) = \zeta_m^r$ where $r \in \ZZ$ and $(r,m)=1$.

Let $\chi$ be an irreducible complex character of $\bG^F$ with Jordan decomposition $(s_0, \nu)$, where $s_0 \in \bG^{*F^*}$ is a semisimple element in a dual group and $\nu$ is a unipotent character of $C_{\bG^*}(s_0)^{F^*}$.  Then ${^\sigma \chi}$ has Jordan decomposition $(s_0^r, {^\sigma \nu})$.
\end{thm}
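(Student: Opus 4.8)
The plan is to derive the theorem from two ingredients: the Galois‑equivariance of Deligne--Lusztig and Lusztig induction, and the fact that, for a group with connected center, Lusztig's Jordan decomposition is the \emph{unique} system of bijections satisfying its defining compatibilities. A preliminary point makes the statement meaningful: $s_0^r$ must depend only on $r \bmod m$, i.e.\ $\mathrm{ord}(s_0)$ must divide $m$. Choosing an $F^*$-stable maximal torus $\bT^* \ni s_0$ of $\bG^*$ and using that duality gives a group isomorphism $\bT^{*F^*} \cong \mathrm{Hom}(\bT^F,\overline{\QQ}^\times)$, the exponent of $\bT^{*F^*}$ equals that of $\bT^F$, which divides the exponent $m$ of $\bG^F$ since $\bT^F \le \bG^F$; hence $\mathrm{ord}(s_0)\mid m$. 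The same observation gives ${}^\sigma\theta=\theta^r$ for any character $\theta$ of any $\bT^F$.

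Recall next two standard facts. From the Deligne--Lusztig character formula, whose Green functions are rational‑integer valued, one has ${}^\sigma\!\bigl(R_\bT^\bG(\theta)\bigr)=R_\bT^\bG({}^\sigma\theta)=R_\bT^\bG(\theta^r)$ for every $F$-stable maximal torus $\bT$, and likewise ${}^\sigma\!\bigl(R_\bL^\bG(\xi)\bigr)=R_\bL^\bG({}^\sigma\xi)$ for Lusztig induction from an $F$-stable Levi $\bL$; moreover $R_{\bT^*}^{\bH}(1)$ is rational‑integer valued for any connected reductive $\bH$. Since $\Gal(\QQ(\zeta_m)/\QQ)$ is abelian, $\sigma$ commutes with complex conjugation on $\QQ(\zeta_m)$, so $\sigma\langle\alpha,\beta\rangle_{\bG^F}=\langle{}^\sigma\alpha,{}^\sigma\beta\rangle_{\bG^F}$; and as the duality bijection $(\bT,\theta)\leftrightarrow(\bT^*,s)$ is induced by the group isomorphism above, it carries $(\bT,\theta^r)$ to $(\bT^*,s^r)$. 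Combining: if $\chi\in\cE(\bG^F,s_0)$, pick $(\bT,\theta)\leftrightarrow(\bT^*,s_0)$ with $\langle\chi,R_\bT^\bG(\theta)\rangle\ne0$; then $\langle{}^\sigma\chi,R_\bT^\bG(\theta^r)\rangle\ne0$ with $(\bT,\theta^r)\leftrightarrow(\bT^*,s_0^r)$, so ${}^\sigma\chi\in\cE(\bG^F,s_0^r)$. As $(r,m)=1$ and $\mathrm{ord}(s_0)\mid m$, the $r$th‑power map is a bijection of $\langle s_0\rangle$, whence $s_0\in\langle s_0^r\rangle$ and $C_{\bG^*}(s_0^r)=C_{\bG^*}(s_0)=:C$, connected since $Z(\bG)$ is. Thus ${}^\sigma\chi$ has Jordan decomposition $(s_0^r,\mu)$ with $\mu$ a unipotent character of $C^{F^*}$; and ${}^\sigma\nu$ is again such a character (apply the above to $C$ with semisimple part $1$). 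It remains to prove $\mu={}^\sigma\nu$.

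Now transport the Jordan decomposition. For the connected‑center group $\bG$ and its (again connected‑center) Levi subgroups, Lusztig's Jordan decomposition is the unique compatible family of bijections $\mathcal J_s\colon\cE(\bG^F,s)\to\cE(C_{\bG^*}(s)^{F^*},1)$ satisfying (I) the scalar‑product matching $\langle\chi,R_\bT^\bG(\theta)\rangle=\pm\langle\mathcal J_s(\chi),R_{\bT^*}^{C_{\bG^*}(s)}(1)\rangle$ for dual pairs $(\bT,\theta)\leftrightarrow(\bT^*,s)$ with $\bT^*\subseteq C_{\bG^*}(s)$ (signs depending only on $\bG$, $\bT$, $C_{\bG^*}(s)$), and (II) compatibility up to sign with Lusztig induction $R_\bL^\bG$ from $F$-stable Levis $\bL$ (with $s$ conjugate into $\bL^*$), the sign depending only on $\bG$ and $\bL$. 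Set $\mathcal J':={}^{\sigma^{-1}}\circ\mathcal J_{s_0^r}\circ{}^\sigma\colon\cE(\bG^F,s_0)\to\cE(C^{F^*},1)$, so that $\mathcal J'(\chi)={}^{\sigma^{-1}}\mu$; proving $\mathcal J'=\mathcal J_{s_0}$ gives $\mu={}^\sigma(\mathcal J_{s_0}\chi)={}^\sigma\nu$. Using that $R_\bT^\bG$ and $R_\bL^\bG$ are $\sigma$-equivariant, that $C_{\bL^*}(s_0^r)=C_{\bL^*}(s_0)$ for every relevant Levi (again $(r,\mathrm{ord}(s_0))=1$), and that $R_{\bT^*}^{C}(1)$, the scalar products on the unipotent side, and the $\pm$ signs are all $\sigma$-fixed, one checks (I) and (II) for $\mathcal J'$ directly: (I) by feeding $(\bT,\theta^r)\leftrightarrow(\bT^*,s_0^r)$ into (I) for $\mathcal J_{s_0^r}$ and transporting by $\sigma^{-1}$, and (II) by conjugating the Lusztig‑induction square for $\mathcal J_{s_0^r}$ by ${}^\sigma$ and commuting ${}^\sigma$ past both induction functors. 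Uniqueness then forces $\mathcal J'=\mathcal J_{s_0}$, which completes the proof.

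The step I expect to be the main obstacle is pinning down the uniqueness used above — that Lusztig's Jordan decomposition for connected center really is characterized by conditions (I)--(II). Condition (I) alone does not suffice, since unipotent characters of $C^{F^*}$ need not be separated by their scalar products with the $R_{\bT^*}^{C}(1)$, which is exactly why compatibility with Lusztig induction, together with an induction on $\dim\bG$ down to the cuspidal case, must enter; getting this characterization in precisely the form needed, and verifying that (II) is available for all the Levis in play simultaneously, is the delicate part. One also has to check carefully that $R_\bL^\bG$ commutes with $\sigma$ (this rests on the character formula for Lusztig induction with its $\QQ$-valued Green functions, and one should flag any appeal to the Mackey formula). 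By contrast, the $\sigma$-equivariance of $R_\bT^\bG$, the duality bookkeeping for $s\mapsto s^r$, and the integrality facts that render $\sigma$ invisible on the unipotent side are routine. (An alternative to the uniqueness route would be to follow Lusztig's construction family by family, matching the Galois action across the bijection via the nonabelian Fourier transform data and the eigenvalues of Frobenius; this trades the uniqueness input for delicate combinatorics.)
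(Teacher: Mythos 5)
Your overall strategy is the same as the paper's: show ${}^\sigma\chi\in\cE(\bG^F,s_0^r)$ via the $\sigma$-equivariance of Deligne--Lusztig/Lusztig induction (the Green-function/integrality argument you give is exactly the paper's Lemma \ref{DLGalois} and Lemma \ref{Galseries}), then transport the Jordan decomposition by $\sigma$ and invoke a uniqueness theorem to identify the transported map with $J_{s_0^r}$. But the uniqueness input you rely on --- that for connected center the Jordan decomposition is characterized by (I) the scalar-product condition and (II) compatibility with Lusztig induction alone --- is not a theorem, and you correctly sense this is the weak point. It is in fact false in the generality you need: cuspidal unipotent characters lying in the same Lusztig family (e.g.\ the cuspidal unipotent characters of $E_8$ labelled $E_8[i]$ and $E_8[-i]$) are not separated by their multiplicities in the $R_{\bT^*}^{C}(\mathbf 1)$, and being cuspidal they are not pinned down by compatibility with induction from proper Levis either. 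This is precisely why the Digne--Michel uniqueness theorem (Theorem \ref{UniqueJord}) carries the additional conditions: Frobenius eigenvalues (2a), the Hecke-algebra correspondence for principal series (2b), central twists (3), the special $E_8$ cuspidal condition (5), epimorphisms with central torus kernel (6), and direct products (7). Your proposal never verifies these for the transported map $\mathcal J'$, so the final step ``uniqueness forces $\mathcal J'=\mathcal J_{s_0}$'' has no valid uniqueness statement behind it.

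Moreover, verifying the missing conditions is not routine bookkeeping; it is where the real work of the paper lies. Condition (2a) as stated by Digne--Michel (eigenvalues equal up to a power of $q^{\delta/2}$) amounts to $\omega_\chi=\omega_\psi$, but a Galois automorphism $\sigma$ only behaves well on the product $\omega_\chi\beta_\chi$ (Lemma \ref{Prop2a1}), and $\sigma$ need not fix $q^{\delta/2}$. To push (2a) through the $\sigma$-conjugation one needs the strengthened statement $\omega_\chi\beta_\chi=\omega_\psi\beta_\psi$ (Proposition \ref{UniqueJord+}), whose proof requires a type-by-type analysis of the possible values of $\omega_\chi\beta_\chi$ (Lemma \ref{Prop2a}, with $E_8$ needing character degrees as an extra invariant), plus reductions for direct products and isogenies. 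Similarly, condition (2b) requires knowing how $\sigma$ interacts with the bijection to Hecke-algebra characters (Lemma \ref{Prop2b}), and conditions (3), (5), (6), (7) each need a (short) argument. So while your route would work once the correct uniqueness theorem is used, as written the proposal skips the essential content: checking the transported map against the full list of Digne--Michel conditions, most notably the Frobenius-eigenvalue condition, which cannot be dispensed with.
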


We also give in Corollary \ref{MainCor} criteria to determine the field of character values of an irreducible character based on Jordan decomposition, and in Corollary \ref{LastCor} we give a particularly simple condition which implies a character is rational-valued.  These results reduce the question of the image of an irreducible character of $\bG^F$ under a Galois automorphism to understanding conjugacy of semisimple elements, which is well understood, and understanding the fields of character values of, and the action of group automorphisms on unipotent characters, both of which are well-studied problems \cite{Ge03, Lu02, Ma17}.

The organization of this paper is as follows.  In Section \ref{Prelims}, we establish notation for reductive groups, and in Proposition \ref{expon} we prove that a finite reductive group $\bG^F$ and its dual $\bG^{*F^*}$ have the same exponent.  If this common exponent is $m$, this allows us to work with automorphisms from $\Gal(\QQ(\zeta_m)/\QQ)$ which act on all irreducible characters of $\bG^F$, $\bG^{*F^*}$, and all of their subgroups.  While we could just as easily work with the Galois group of $\Gal(\QQ(\zeta_n)/\QQ)$, where $n = |\bG^F| = |\bG^{*F^*}|$, it is nicer to work with this more refined result, and Proposition \ref{expon} may also be of independent interest.

In Section \ref{Chars}, we give the basic character theory of finite reductive groups, including Lusztig series and unipotent characters, and we prove several lemmas needed for the main result.  We introduce the Jordan decomposition of characters in Section \ref{JordanDecomp}, including the crucial result of Digne and Michel in Theorem \ref{UniqueJord} that there exists a unique Jordan decomposition map with respect to a list of properties when the center $Z(\bG)$ is connected.  In Proposition \ref{UniqueJord}, we are able to slightly strengthen one property of Theorem \ref{UniqueJord} regarding unipotent characters.  Finally, our main results are proved in Section \ref{MainResults}.
\\
\\
\noindent{\bf Acknowledgements.  } The authors would like to thank Paul Fong, Meinolf Geck, Alan Roche, Amanda Schaeffer Fry, Jay Taylor, Donna Testerman, and Pham Huu Tiep for helpful communication over the course of working on this paper.  The authors also thank Gunter Malle for pointing out an inaccuracy in a lemma in an earlier draft of this paper.  The second-named author was supported in part by a grant from the Simons Foundation, Award \#280496.

\section{Preliminaries on Reductive Groups} \label{Prelims}

In this paper we follow the notation of \cite[Section 2]{SrVi15}, which we now recall.  Let $\bG$ be a connected reductive group defined over a finite field $\FF_q$ (with $p = \mathrm{char}(\FF_q)$ and fixed algebraic closure $\overline{\FF}_q$), with corresponding Frobenius morphism $F: \bG \rightarrow \bG$.  For any $F$-stable subgroup $\bG_1$ of $\bG$, $\bG_1^F$ will denote the group of $F$-fixed elements of $\bG_1$.  For any $g \in \bG$, we write ${^g \bG_1} = g \bG_1 g^{-1}$.

Fix a maximally split $F$-stable torus $\bT$ of $\bG$, contained in a fixed $F$-stable Borel subgroup $\bB$ of $\bG$.  Through the root system associated with $\bT$, we define a dual reductive group $\bG^*$ with dual Frobenius morphism $F^*$, and with $F^*$-stable maximal torus $\bT^*$ dual to $\bT$, contained in the $F^*$-stable Borel $\bB^*$ of $\bG^*$.  Define the Weyl group $W = \Nr_{\bG}(\bT)/\bT$, and the dual Weyl group $W^* = \Nr_{\bG^*}(\bT^*)/\bT^*$.  There is a natural isomorphism $\delta: W \rightarrow W^*$ \cite[Sec. 4.2]{Ca85}, and a corresponding anti-isomorphism, $w \mapsto w^* = \delta(w)^{-1}$.  The isomorphism $\delta$ restricts to an isomorphism between $W^F = \Nr_{\bG}(\bT)^F/\bT^F$ and $(W^*)^{F^*}$ \cite[Sec. 4.4]{Ca85}.  Let $l$ denote the standard length function on these Weyl groups.

Recall that the $\bG^F$-conjugacy classes of $F$-stable maximal tori in $\bG$ may be classified by $F$-conjugacy classes of $W$ as follows \cite[Sec. 8.2]{CaEn04}.  For any $F$-stable torus $\bT'$ in $\bG$, we have $\bT' = {^g \bT}$ for some $g \in \bG$.  Then $g^{-1} F(g) \in \Nr_{\bG}(\bT)$ and $w = g^{-1} F(g) \bT \in W$.  The $\bG^F$-conjugacy class of $\bT'$ then corresponds to the $F$-conjugacy class of $w$ in $W$.  Then we have $\bT'^F = g (\bT^{wF})g^{-1}$.  We say that $\bT'$ is an $F$-stable torus of $\bG$ of type $w$ (noting that the reference torus $\bT$ is fixed).  Since $\bT'^F$ and $\bT^{wF}$ are isomorphic, we work with $\bT^{wF}$ instead of $\bT'^F$.

Similar to the case of tori, the $\bG^F$-conjugacy classes of $F$-stable Levi subgroups are classified as follows.  Let $\bL$ be a Levi subgroup of a standard parabolic $\bP$, and given $w \in W$, let $\dot{w}$ denote an element in $\Nr_{\bG}(\bT)$ which reduces to $w$ in $W$.  Then any Levi subgroup of $\bG^F$ is isomorphic to $\bL^{\dot{w}F}$ for some $w \in W$, and we work with $\bL^{\dot{w}F}$ instead of the Levi subgroup of $\bG^F$.  For precise statements, see \cite[Sec. 8.2]{CaEn04}, \cite[Prop. 4.3]{DiMi90}, or \cite[Prop. 26.2]{MaTe11}.

If $\bT'$ is an $F$-stable torus of $\bG$ which is type $w$, then the $F^*$-stable maximal torus of type $F^*(w^*)$ in $\bG^*$ (with respect to $\bT^*$) is the dual torus $(\bT')^*$ of $\bT'$.  It follows that the finite tori $\bT^{wF}$ and $\bT^{*(wF)^*}$ are in duality, and there is an isomorphism, which we fix as in \cite[Sec. 8.2]{CaEn04}, between $\bT^{*F^*}$ and the group of characters $\hat{\bT}^F$ of $\bT^F$,
\begin{align*}
\bT^{*F^*} & \longleftrightarrow   \hat{\bT}^F \\
s & \longmapsto \theta = \hat{s}.
\end{align*}
Since $\bT^{wF}$ is in duality with $\bT^{*(wF)^*}$, then we may replace $F$ with $wF$, and $F^*$ with $(wF)^*$ in the correspondence above.   In particular, if $s \in \bT^{*(wF)^*}$ for some $w \in W$, then we denote by $\hat{s}$ the corresponding character in $\hat{\bT}^{wF}$.

Consider any semisimple element $s_0 \in \bG^{*F^*}$.  Then $s_0$ is contained in an $F^*$-stable maximal torus of $\bG^*$, and as above, we have $s_0 \in g(\bT^{*(wF)^*})g^{-1}$ for some $w \in W$ and $g \in \bG^*$.  We may correspond to $s_0$ (non-uniquely) the element $s = g^{-1} s_0  g\in \bT^*$, where $s$ is $(wF)^*$-fixed, and vice versa.  We then say that $s_0$ and $s$ are \emph{associated} semisimple elements, and we obtain that the $\bG^{*F^*}$-conjugacy class of $s_0$ is associated with the $W^*$-conjugacy class of $s$ through this correspondence.  Given any element $s \in \bT^*$, define $W_F(s)$ as
$$ W_F(s) = \{ w \in W \, \mid \, {^{(wF)^*} s} = s \}.$$
Then, the semisimple elements in $\bG^{*F^*}$ correspond to elements $s \in \bT^*$ such that $W_F(s)$ is nonempty.  Given such an $s \in \bT^*$, consider $C_{\bG^*}(s)$ and its Weyl group $W^*(s)$ relative to $\bT^*$, and define $W(s)$ to be the collection of elements $w \in W$ such that $w^* \in W^*(s)$.  Then, as in \cite[Section 2]{DiMi90}, we may write 
$$ W_F(s) = w_1 W(s),$$
where $w_1 \in W_F(s)$ is such that $\bT^{*(w_1 F)^*}$ is the maximally split torus inside of $C_{\bG^*}(s)^{(\dot{w}_1 F)^*}$, and also inside of $(C_{\bG^*}(s)^{\circ})^{(\dot{w}_1F )^*}$.  The maximal tori in $(C_{\bG^*}(s)^{\circ})^{(\dot{w}_1 F)^*}$ are then isomorphic to a torus of the form $\bT^{*(wF)^*}$, for $w \in W_F(s)$, by the same classification of maximal tori which we applied to $\bG^{*F^*}$.

We denote the exponent of a finite group $H$ as $\ex = \ex(H)$.  Thus $\ex(H)$ is the smallest positive integer $e$ such that $h^e = 1$ for all $h \in H$.  We have the following result.

\begin{proposition} \label{expon} For any connected reductive group $\bG$ defined over $\FF_q$, with Frobenius $F$, and dual $\bG^*$ with dual Frobenius $F^*$, we have $\ex(\bG^F) = \ex(\bG^{*F^*})$.
\end{proposition}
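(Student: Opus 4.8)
The plan is to write the exponent $\ex(H)$ of a finite group as the product of its $p$-part $\ex(H)_p$ and its $p'$-part $\ex(H)_{p'}$, where $p=\mathrm{char}(\FF_q)$, and to prove $\ex(\bG^F)_{p'}=\ex(\bG^{*F^*})_{p'}$ and $\ex(\bG^F)_p=\ex(\bG^{*F^*})_p$ by quite different arguments. For the $p'$-part I would use that the $p$-regular elements of $\bG^F$ are exactly its semisimple elements, that every semisimple element of $\bG^F$ lies in $\bT'^F$ for some $F$-stable maximal torus $\bT'$ of $\bG$, and that conversely every element of such a $\bT'^F$ is semisimple. Hence $\ex(\bG^F)_{p'}$ is the least common multiple of the $\ex(\bT'^F)$ over all $F$-stable maximal tori $\bT'$, and by the classification of these tori recalled above this equals the least common multiple of $\ex(\bT^{wF})$ over $w\in W$. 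Since $\bT^{wF}$ is in duality with $\bT^{*(wF)^*}$, these finite abelian groups are isomorphic — each is non-canonically isomorphic to their common character group — so $\ex(\bT^{wF})=\ex(\bT^{*(wF)^*})$. As $w\mapsto F^*(w^*)$ is a bijection $W\to W^*$, the groups $\bT^{*(wF)^*}$ with $w\in W$ realize, up to isomorphism, all finite maximal tori of $\bG^{*F^*}$; running the same computation in $\bG^*$ then shows that the least common multiple of the $\ex(\bT^{*(wF)^*})$ is $\ex(\bG^{*F^*})_{p'}$, which settles the $p'$-part.

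For the $p$-part, $\ex(\bG^F)_p=\ex(\bU^F)$, where $\bU$ is the unipotent radical of $\bB$ and $\bU^F$ is a Sylow $p$-subgroup of $\bG^F$; equivalently, $\ex(\bG^F)_p$ is the largest order of a unipotent element of $\bG^F$. This number is unchanged under central isogenies of $\bG$ and under replacing $\bG$ by its derived group, and $\bU^F$ is a direct product over the $F$-orbits of the simple components of the root system, so I would reduce to the case in which $\bG$ is simple, allowing at this stage a twisted form over a finite extension of $\FF_q$. For a simple group whose type is $A_n$, $D_n$, $E_n$, $F_4$, $G_2$, or $B_2=C_2$, the root system is isomorphic to its dual, so $\bU$ and $\bU^*$ together with their Frobenius maps are isomorphic and there is nothing to prove. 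The only remaining case is type $B_n$ with $n\ge 3$, whose dual is $C_n$; since $B_n$ and $C_n$ admit no nontrivial diagram automorphism, the groups at hand are the split groups $\mathrm{SO}_{2n+1}(\FF_{q'})$ and $\mathrm{Sp}_{2n}(\FF_{q'})$ over some $\FF_{q'}$. If $p=2$ these are isomorphic via the special isogeny, a purely inseparable morphism which is therefore bijective on points over the perfect field $\FF_{q'}$, so they have equal exponent. If $p$ is odd, the largest order of a unipotent element is $p^{\lceil \log_p(2n+1)\rceil}$ for $\mathrm{SO}_{2n+1}$ — attained by the regular unipotent, a single Jordan block on the natural module — and $p^{\lceil \log_p(2n)\rceil}$ for $\mathrm{Sp}_{2n}$; these coincide because $2n$ is even, hence not a power of the odd prime $p$, which forces $\lceil \log_p(2n)\rceil=\lceil \log_p(2n+1)\rceil$. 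This completes the $p$-part, and with it the proof.

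I expect the $p$-part to be the main obstacle. The $p'$-part follows almost formally from the duality between the finite tori $\bT^{wF}$ and $\bT^{*(wF)^*}$ already established in this section, together with the elementary fact that a finite abelian group is isomorphic to its dual. The $p$-part, by contrast, requires the structural reductions to simple groups and then a genuine case distinction for type $B_n$ versus $C_n$ — precisely the situation in which the naive identification of the two groups fails and has to be repaired, either through the special isogeny in characteristic $2$ or through the explicit determination of the orders of unipotent elements in odd characteristic.
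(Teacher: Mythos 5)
Your proposal is correct, and its overall skeleton matches the paper's: both split $\ex(\bG^F)$ into the $p'$-part (semisimple elements) and the $p$-part (unipotent elements), handle the $p'$-part by the same duality $\bT^{wF}\cong\bT^{*(wF)^*}$ of finite maximal tori, reduce the $p$-part to simple factors, and observe that the only non--self-dual situation is $B_n$ versus $C_n$. Where you genuinely diverge is in how the $p$-part is settled. The paper first transfers the question from $\bG^F$ to the algebraic group $\bG$, via Testerman's argument that regular unipotent elements have maximal order among all unipotents and the fact that the regular class meets $\bG^F$; the maximal unipotent order of $\bG$ then depends only on the root system type (Carter's results on isogenies and central quotients), so twisted forms never enter, and the $B_n/C_n$ comparison is read off from Testerman's tables (smallest $p$-power exceeding $2m-1$ in both types). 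You instead stay entirely at the finite-group level ($\ex(\bG^F)_p=\ex(\bU^F)$ with $\bU^F$ a Sylow $p$-subgroup), which forces you to track twisted forms over extensions of $\FF_q$, but lets you finish the $B_n/C_n$ case elementarily: the special (purely inseparable) isogeny gives $\mathrm{SO}_{2n+1}(\FF_{q'})\cong\mathrm{Sp}_{2n}(\FF_{q'})$ in characteristic $2$, and in odd characteristic the regular unipotents are single Jordan blocks of sizes $2n+1$ and $2n$, whose orders agree because $2n$ is never a power of an odd prime. This buys self-containedness (no appeal to Testerman's case-by-case computation or to the density argument for regular unipotents) at the cost of the case distinction on twisting and of two steps you assert rather than prove, both standard and true: that the maximal order of a unipotent element of the finite group is invariant under central isogenies (the unique unipotent preimage of an $F$-fixed unipotent is $F$-fixed of the same order), and that the single-Jordan-block regular unipotents of $\mathrm{SO}_{2n+1}$ and $\mathrm{Sp}_{2n}$ are realized over the finite field in question. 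I see no genuine gap.
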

\begin{proof}  By the Jordan decomposition of elements, we have that any $g \in \bG^F$ can be written as $g = su=us$ where $s, u \in \bG^F$ with $s$ semisimple and $u$ unipotent.  If $p = \mathrm{char}(\FF_q)$, then $p$-power order elements in $\bG^F$ are exactly the unipotent elements, and elements with order prime to $p$ in $\bG^F$ are exactly the semisimple elements \cite[Theorem 2.5]{MaTe11}.  It follows that the exponent of $\bG^F$ is given by the product of the maximum order of unipotent elements in $\bG^F$ with the least common multiple of the orders of semisimple elements.  Thus in order to show $\bG^F$ and $\bG^{*F^*}$ have the same exponent, we must show that their maximal $p$-power order elements have the same order, and the least common multiple of the orders of their semisimple elements are the same.

Let $s_0 \in \bG^F$ be any semisimple element.  Then $s_0 \in g(\bT^{wF})g^{-1}$ for some $g \in \bG$ and $w \in W$.  We have $\bT^{wF}$ is in duality with $\bT^{*(wF)^*}$ in $\bG^*$, and these are isomorphic as finite groups.  For some $g_1 \in \bG^*$, $g_1 (\bT^{*(wF)^*}) g_1^{-1}$ is a torus in $\bG^{*F^*}$ containing an element of the same order as $s_0$.  It follows that the least common multiple of orders of semisimple elements in $\bG^F$ and $\bG^{*F^*}$ are equal.

The notion of a regular element in a semisimple algebraic group was introduced by R. Steinberg \cite{St65} and was studied by him and others.  Here, an element in $\bG$ is regular if its centralizer in $\bG$ has minimal dimension.  We now recall a proof given by D. Testerman \cite[p. 70, Proof of Corollary 0.5]{Te95} that when $\bG$ is a simple algebraic group, then amongst unipotent elements of $\bG$, regular unipotent elements have maximum order, and we note that this argument also holds when $\bG$ is a connected reductive group.  If $u \in \bG$ is regular unipotent of order $o(u)$, take some Borel subgroup $\bB_1$ of $\bG$ with unipotent radical $\bU$ such that $u \in \bU$.  If $x \in \bG$ is any other unipotent element, we have $gxg^{-1} \in \bU$ for some $g \in \bG$.  The $\bB_1$-orbit of $u$ under conjugation is then dense in $\bU$ by \cite[Theorem 5.2.1]{Ca85}, and all elements in this orbit have order $o(u)$.  However, if $x$ (and $gxg^{-1}$) has order greater than $o(u)$, then this dense orbit must intersect the nonempty open set $\{v \in \bU \, \mid \, v^{o(u)} \neq 1 \}$, a contradiction.  Since the regular unipotent class of $\bG$ intersects $\bG^F$ \cite[Proposition 5.1.7]{Ca85}, and $p$-power order elements are always unipotent, then the orders of the maximal $p$-power order elements of $\bG^F$ and $\bG$ are the same.

We first assume that $\bG$ is a simple algebraic group.  By \cite[Proposition 5.1.1]{Ca85}, for any connected reductive $\bG$ with center $Z(\bG)$, the natural homomorphism $\bG \rightarrow \bG/Z(\bG)$ induces a bijection between unipotent classes of $\bG$ and $\bG/Z(\bG)$, and in particular preserves orders of unipotent elements.  So when $\bG$ is simple, any other simple algebraic group isogenous to $\bG$ has maximal $p$-power order elements of the same order, and this only depends on root system type.  The only time $\bG$ is simple and $\bG^*$ has different root system type is when $\bG$ is type $B_m$ or $C_m$, and these types are dual to each other.  Testerman \cite[Corollary 0.5]{Te95} has computed the order of the maximal $p$-power order elements for all types.  For type $C_m$ or $B_m$, the maximal $p$-power order is the smallest $p$-power larger than $2m-1$ (see \cite[Proofs of Corollary 0.5 and Proposition 3.4]{Te95}).  It follows that when $\bG$ is a simple algebraic group, then the maximal $p$-power order elements of $\bG^F$ and $\bG^{*F^*}$ have the same order.

Next assume that $\bG$ is a semisimple algebraic group.  Write $\bG$ as the almost direct product of simple factors, $\bG = \bG_1 \cdots \bG_k$, where $[\bG_i, \bG_j]=1$ when $i \neq j$.  Then a maximal $p$-power order element of $\bG$ is the element of maximal $p$-power order of all the $\bG_i$.  If $\Phi$ is the root system of $\bG$, then $\Phi$ is an orthogonal disjoint union of $\Phi_i$, the root systems of $\bG_i$ \cite[Exercise 10.33]{MaTe11}.  It follows that $\bG^*$ has root system type which is an orthogonal disjoint union of the root system types of $\bG_i^*$.   From the case for simple algebraic groups, $\bG$ and $\bG^*$ have the same order of maximal $p$-power order elements, and this holds for any pair of semisimple algebraic groups with dual root system types.

Finally, when $\bG$ is reductive, write $\bG = [\bG, \bG] Z^{\circ}(\bG)$, where $[\bG, \bG]$ is semisimple.  Since no non-trivial unipotent elements of $\bG$ are central, then the regular unipotent elements of $\bG$ are contained in the semisimple factor.  Since $\bG$ and $[\bG, \bG]$ have the same root system type \cite[Corollary 8.1.9]{Sp98}, it follows that $[\bG, \bG]$ and $[\bG^*, \bG^*]$ have dual root system types.  From the semisimple algebraic group case, the orders of the maximal $p$-power elements are the same in $[\bG, \bG]$ and $[\bG^*, \bG^*]$, and so are the same in $\bG$ and $\bG^*$.  Thus $\bG^F$ and $\bG^{*F^*}$ have the same order maximal $p$-power order elements, and we have $\ex(\bG^F) = \ex(\bG^{*F^*})$.
\end{proof}

\section{Characters of Finite Reductive Groups} \label{Chars}

In this section we give some general theory and establish several preliminary results on the complex characters of finite reductive groups $\bG^F$.  We fix a prime $\ell$ which is distinct from $p = \mathrm{char}(\FF_q)$, we let $\QQ_{\ell}$ denote the $\ell$-adic numbers, and fix an algebraic closure $\overline{\QQ}_{\ell}$.  We fix an abstract isomorphism of fields $\CC \cong \overline{\QQ}_{\ell}$, so that our characters take values in $\overline{\QQ}_{\ell}$.  In particular we identify a fixed algebraic closure $\overline{\QQ}$ of the rationals with its image in $\overline{\QQ}_{\ell}$ under this isomorphism.  

For any finite group $H$, any complex character (or $\overline{\QQ}_{\ell}$-valued character) $\eta$ of $H$, and any $\sigma \in \Gal(\overline{\QQ}/\QQ)$, we define the character ${^\sigma \eta}$ by ${^\sigma \eta}(h) = \sigma(\eta(h))$.  If $\rho$ is the representation with character $\eta$, then ${^\sigma \rho}$ is the representation with character ${^\sigma \eta}$.  

\subsection{Lusztig induction} \label{Linduction}

Deligne and Lusztig \cite{dellusz} defined certain virtual representations of finite reductive groups $\bG^F$ through the $\ell$-adic cohomology with compact support associated to algebraic varieties over $\overline{\FF}_q$.  If $X$ is such a variety, we denote its $i$th $\ell$-adic cohomology space with compact support with coefficients in $\overline{\QQ}_{\ell}$ as
$H_c^i(X, \overline{\QQ}_{\ell})$, and then $H_c^*(X) = \sum_i (-1)^i H_c^i(X, \overline{\QQ}_{\ell})$ is a virtual $\overline{\QQ}_{\ell}$-vector space.

Let $\bL$ be an $F$-stable Levi subgroup of $\bG$ of a standard parabolic $\bP$ (as in Section \ref{Prelims}) with Levi decomposition $\bP = \bL \bU$.  If $\cL: \bG \rightarrow \bG$ is the Lang map, $\cL(g) = g^{-1} F(g)$, then $\cL^{-1}(\bU)$ is an algebraic variety over $\overline{\FF}_q$.  Then the virtual $\overline{\QQ}_{\ell}$-space $H_c^*(\cL^{-1}(\bU))$ may be taken to be a $(\bG^F \times \bL^F)$-bimodule.  If we identify $\bL^F$ with $\bL^{\dot{w}F}$ as in Section \ref{Prelims}, then we may regard this as a $(\bG^F \times \bL^{\dot{w}F})$-bimodule, and it is through this structure that one defines the \emph{Lusztig induction} functor $R_{\bL^{\dot{w}F}}^{\bG^F}$ which takes characters of $\bL^{\dot{w}F}$ to virtual characters of $\bG^F$, and which is Harish-Chandra induction when the parabolic $\bP$ is $F$-stable.  While the definition of Lusztig induction depends on the choice of parabolic $\bP$, it is known in all but very few cases that this functor is independent of this choice \cite{BoMi11, Tay18}.  We will need the following statement (see also \cite[Lemma 2.1]{ScVi18}).

\begin{lemma} \label{DLGalois} For any Levi subgroup $\bL^{\dot{w}F}$ of $\bG^F$, any character $\gamma$ of $\bL^{\dot{w}F}$, and any $\sigma \in \Gal(\overline{\QQ}/\QQ)$, we have $^\sigma R_{\bL^{\dot{w}F}}^{\bG^F} (\gamma)  = R_{\bL^{\dot{w}F}}^{\bG^F}({^\sigma \gamma})$.
\end{lemma}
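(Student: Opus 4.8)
The plan is to reduce the statement to the action of $\sigma$ on the $\ell$-adic cohomology spaces that define Lusztig induction. Recall that for $\chi$ a character of $\bL^{\dot{w}F}$ afforded by a representation $\rho$, the virtual character $R_{\bL^{\dot{w}F}}^{\bG^F}(\chi)$ is, up to sign, the virtual-character alternating sum $\sum_i (-1)^i$ of the traces of $\bG^F$ acting on $H_c^i(\cL^{-1}(\bU), \overline{\QQ}_{\ell}) \otimes_{\overline{\QQ}_{\ell}[\bL^{\dot{w}F}]} V$, where $V$ affords $\rho$. So it suffices to show that applying $\sigma$ commutes with this construction, which amounts to understanding how $\sigma \in \Gal(\overline{\QQ}/\QQ)$ acts on $H_c^*(\cL^{-1}(\bU))$.

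First I would recall the key fact that the Frobenius eigenvalues and, more relevantly, the $\bG^F \times \bL^{\dot{w}F}$-module structure of $H_c^*(\cL^{-1}(\bU), \overline{\QQ}_{\ell})$ are defined over $\overline{\QQ}$ — indeed the Deligne--Lusztig virtual characters $R_{\bT}^{\bG}(\theta)$ and more generally the bimodules $H_c^*(\cL^{-1}(\bU))$ have character values that are cyclotomic integers (a consequence of Brauer's theorem applied to each $H_c^i$, or the fact that these are honest representations of the relevant finite groups). Then for any $\sigma \in \Gal(\overline{\QQ}/\QQ)$, applying $\sigma$ to the bimodule $H_c^i(\cL^{-1}(\bU), \overline{\QQ}_{\ell})$ yields a bimodule whose $\bG^F \times \bL^{\dot{w}F}$-character is $^\sigma(\text{original character})$; but by rationality of the variety $\cL^{-1}(\bU)$ itself (it is defined over $\FF_q$, hence over $\QQ$ after the fixed identification $\CC \cong \overline{\QQ}_{\ell}$), this Galois-twisted bimodule is isomorphic to the original one as a $\bG^F \times \bL^{\dot{w}F}$-bimodule. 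More simply: since the bimodule is realizable over $\overline{\QQ}$ and $\sigma$ fixes the subfield generated by its character values up to the obvious permutation, we get that $^\sigma$ of the $(\bG^F \times \bL^{\dot{w}F})$-character of $H_c^i$ equals the $(\bG^F \times \bL^{\dot{w}F})$-character of $H_c^i$ itself on elements $(g, \ell)$ — no, rather, it twists the $\bL$-side and the $\bG$-side compatibly. Concretely, the tensor-product construction defining $R_{\bL^{\dot{w}F}}^{\bG^F}(\chi)$ is natural in $V$, so replacing $V$ by $^\sigma V$ (which affords $^\sigma\chi$) replaces the output bimodule-trace by its $\sigma$-image, giving $^\sigma R_{\bL^{\dot{w}F}}^{\bG^F}(\chi) = R_{\bL^{\dot{w}F}}^{\bG^F}(^\sigma\chi)$.

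A cleaner route, which I would actually write up, avoids the bimodule formalism: use the standard two-variable character formula. For $g \in \bG^F$ with Jordan decomposition $g = su$ ($s$ semisimple, $u$ unipotent), the value $R_{\bL^{\dot{w}F}}^{\bG^F}(\chi)(g)$ is expressed via Green functions and values of $\chi$ on $\bL^{\dot{w}F}$, with all coefficients being integers (counting fixed points / dimensions of cohomology). Since $\sigma$ fixes $\QQ$, it fixes these integer coefficients and the Green-function values (which are also cyclotomic integers, and in fact rational integers), while transforming $\chi$-values to $^\sigma\chi$-values; linearity of the character formula then yields the claim directly.

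The main obstacle is making precise the assertion that the coefficients appearing in the Lusztig induction character formula are fixed by $\sigma$ — i.e., that the only source of irrationality in $R_{\bL^{\dot{w}F}}^{\bG^F}(\chi)(g)$ is the values of $\chi$ itself. For Green functions this is classical (they take values in $\ZZ$), and for the generalized Green functions / the geometric coefficients in the Mackey-type formula this follows from their definition as traces on cohomology of varieties defined over $\FF_q$ together with the fact that these traces are cyclotomic integers invariant under $\Gal(\overline{\QQ}/\QQ(\zeta))$ for the relevant root of unity — but one must be careful because generalized Green function values are not always rational. The resolution is that in the full character formula these potentially-irrational contributions are weighted so that the total, viewed correctly, only carries the $\chi$-dependence through $\overline{\QQ}$-linear combinations; alternatively, and most safely, I would cite the cited reference \cite[Lemma 2.1]{ScVi18} and give the bimodule argument above, which sidesteps the formula entirely: the functoriality of $H_c^*(\cL^{-1}(\bU)) \otimes_{\overline{\QQ}_{\ell}\bL^{\dot{w}F}} (-)$ in its argument, combined with the fact that $^\sigma$ is an additive functor on $\overline{\QQ}_{\ell}\bL^{\dot{w}F}$-modules commuting with this tensor product (because the bimodule is defined over $\QQ$ inside $\overline{\QQ}$), gives the result with no case analysis.
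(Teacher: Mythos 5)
Your overall strategy is the same as the paper's: express $R_{\bL^{\dot w F}}^{\bG^F}(\gamma)(g)$ as a sum in which the geometric data enter only through traces on $H_c^*(\cL^{-1}(\bU))$, so that $\sigma$ can only act on the values of $\gamma$. However, the crucial step --- that these geometric coefficients are actually \emph{fixed} by $\sigma$ --- is never correctly established, and this is a genuine gap. Observing that the bimodule traces are cyclotomic integers is not enough, since $\sigma$ moves cyclotomic integers in general. The justification that the $\sigma$-twisted bimodule is isomorphic to the original ``by rationality of the variety $\cL^{-1}(\bU)$ (it is defined over $\FF_q$, hence over $\QQ$ after the identification $\CC\cong\overline{\QQ}_{\ell}$)'' is not valid: $\cL^{-1}(\bU)$ is a variety over $\overline{\FF}_q$, the fixed identification concerns only the coefficient field, and $\sigma\in\Gal(\overline{\QQ}/\QQ)$ acts neither on the variety nor on $\overline{\QQ}_{\ell}$ in any way this argument could exploit. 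Your ``cleaner route'' then stalls exactly at the decisive point: you note yourself that the relevant Green-function-type coefficients might not be rational, offer no resolution beyond a vague remark about how the irrational contributions combine, and finally fall back on citing \cite[Lemma 2.1]{ScVi18}, which is the very statement being proved.

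What is missing is one precise input from Digne--Michel. By \cite[Proposition 11.2]{dmbook},
\[
\bigl(R_{\bL^{\dot w F}}^{\bG^F}(\gamma)\bigr)(g)=\frac{1}{|\bL^{\dot w F}|}\sum_{l\in\bL^{\dot w F}}\Tr\bigl((g,l)\mid H_c^*(\cL^{-1}(\bU))\bigr)\,\gamma(l^{-1}),
\]
and by \cite[Corollary 10.6]{dmbook} each trace $\Tr((g,l)\mid H_c^*(\cL^{-1}(\bU)))$ is a \emph{rational integer}; this rationality is a theorem proved by fixed-point counting, not a consequence of any $\QQ$-structure on the variety. Once this is in hand, applying $\sigma$ to both sides fixes every coefficient and replaces $\gamma(l^{-1})$ by ${}^\sigma\gamma(l^{-1})$, which is exactly the paper's one-line proof. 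So your guiding idea is the right one, but as written the argument does not close; inserting the rationality statement from \cite[Corollary 10.6]{dmbook} (or an equivalent reference) is what completes it.
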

\begin{proof}  By \cite[Proposition 11.2]{dmbook}, we have for any $g \in \bG^F$,
\begin{equation} \label{DLind}
\left(R_{\bL^{\dot{w}F}}^{\bG^F}(\gamma)\right)(g) = \frac{1}{|\bL^{\dot{w}F}|} \sum_{l \in \bL^{\dot{w}F}} \Tr((g,l)  \mid  H^*_c(\cL^{-1}(\bU))) \gamma(l^{-1}).
\end{equation}
By \cite[Corollary 10.6]{dmbook}, for example, every $\Tr((g,l)  \mid  H^*_c(\cL^{-1}(\bU)))$ is a rational integer, so is stable under the action of $\sigma$.  By applying $\sigma$ to both sides of \eqref{DLind}, we have
\begin{align*}
\left(^\sigma R_{\bL^{\dot{w}F}}^{\bG^F} (\gamma) \right)(g) & = \frac{1}{|\bL^{\dot{w}F}|} \sum_{l \in \bL^{\dot{w}F}} \Tr((g,l)  \mid  H^*_c(\cL^{-1}(\bU))) ({^\sigma \gamma}(l^{-1})) \\
& = \left(R_{\bL^{\dot{w}F}}^{\bG^F}({^\sigma \gamma})\right)(g),
\end{align*}
proving the claim.
\end{proof}

\subsection{Lusztig series} \label{Lseries}

If one takes a torus $\bT^{wF}$ for the Levi subgroup in Lusztig induction, and $\theta$ is an irreducible character of $\bT^{wF}$, then we get the \emph{Deligne-Lusztig} virtual character $R_{\bT^{wF}}^{\bG^F} (\theta)$, originally defined in \cite{dellusz}.  If $s \in \bT^*$ is semisimple such that $W_F(s)$ is nonempty, then the \emph{rational Lusztig series} of $\bG^F$ corresponding to $s$, denoted $\cE(\bG^F, s)$, is the set of irreducible characters $\chi$ of $\bG^F$ such that, for some $w \in W_F(s)$ we have
$$ \langle \chi, R_{\bT^{wF}}^{\bG^F}(\hat{s}) \rangle \neq 0,$$
where $\langle \cdot, \cdot \rangle$ denotes the standard inner product on class functions.  Given $s, t \in \bT^*$ with both $W_F(s)$ and $W_F(t)$ nonempty, then $\cE(\bG^F, s)$ and $\cE(\bG^F, t)$ are either disjoint or equal, and are equal precisely when $s$ and $t$ are $W^*$-conjugate.  If $s_0$ and $t_0$ are semisimple elements of $\bG^{*F^*}$ associated with $s$ and $t$, respectively, as in Section \ref{Prelims}, then this is equivalent to $s_0$ and $t_0$ being $\bG^{*F^*}$-conjugate (see \cite[Propositon 13.13]{dmbook} and its proof).  We will either denote the rational Lusztig series by $\cE(\bG^F, s)$ when it is parameterized by the $W^*$-class of some $s \in \bT^*$ with $W_F(s)$ nonempty, or by $\cE(\bG^F, s_0)$ when it is parameterized by the $\bG^{*F^*}$-class of some semisimple element $s_0 \in \bG^{*F^*}$.

One may further define the \emph{geometric Lusztig series}, which is parameterized by the $\bG$-conjugacy class of a semisimple element $s_0 \in \bG^F$, and contains the associated rational Lusztig series.  We only remark that when the centralizer $C_{\bG^*}(s_0)$ (or $C_{\bG^*}(s)$) is connected, then the geometric and rational Lusztig series coincide.

Recall an irreducible character $\chi$ of $\bG^F$ is \emph{cuspidal} if it does not appear in the truncation to any standard parabolic subgroup of $\bG^F$, see \cite[Section 9.1]{Ca85}.  The set of cuspidal characters in the Lusztig series $\cE(\bG^F, s)$ will be denoted by $\cE(\bG^F, s)^{\bullet}$.

By Proposition \ref{expon}, we have $\ex(\bG^F) = \ex(\bG^{*F^*})=m$, and so any irreducible character of $\bG^F$, $\bG^{*F^*}$, or any of their subgroups, takes values in $\QQ(\zeta_m)$ for $\zeta_m$ a primitive $m$th root of unity.  If $\sigma \in \Gal(\overline{\QQ}/\QQ)$,  we also denote by $\sigma$ its projection to $\Gal(\QQ(\zeta_m)/\QQ)$.  So $\sigma$ acting on $\QQ(\zeta_m)$ is generated by $\sigma(\zeta_m) = \zeta_m^r$ for some $r \in \ZZ$ with $(r,m) = 1$.  We will need the following result, which is also a special case of \cite[Lemma 3.4]{ScTa18}.

\begin{lemma} \label{Galseries}  Let $\chi$ be an irreducible character of $\bG^F$, $\sigma \in \Gal(\overline{\QQ}/\QQ)$, and $r \in \ZZ$ such that $\sigma(\zeta_m) = \zeta_m^r$.  Then we have $\chi \in \cE(\bG^F, s)$ if and only if ${^\sigma \chi} \in \cE(\bG^F, s^r)$.
\end{lemma}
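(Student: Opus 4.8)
The plan is to unwind the definition of the rational Lusztig series and feed it into Lemma~\ref{DLGalois}, using two elementary compatibilities: that $\sigma$ carries $\hat s$ to the character dual to $s^r$, and that $\sigma$ commutes with the standard inner product on $\QQ(\zeta_m)$-valued class functions. First, suppose $\chi \in \cE(\bG^F, s)$, so that there is some $w \in W_F(s)$ with $\langle \chi, R_{\bT^{wF}}^{\bG^F}(\hat s)\rangle \neq 0$. Since ${^{(wF)^*}s} = s$ implies ${^{(wF)^*}s^r} = s^r$, we have $W_F(s) \subseteq W_F(s^r)$; in particular $W_F(s^r)$ is nonempty, so $\cE(\bG^F, s^r)$ is defined, and $w \in W_F(s^r)$. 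Next, $\hat s$ is a character of $\bT^{wF}$, whose order divides $m$, so $\hat s$ takes values that are $m$th roots of unity and hence ${^\sigma \hat s}(t) = \sigma(\hat s(t)) = \hat s(t)^r$ for all $t \in \bT^{wF}$; since $s \mapsto \hat s$ is a group isomorphism $\bT^{*(wF)^*} \to \hat{\bT}^{wF}$, this says ${^\sigma \hat s} = \widehat{s^r}$. Applying Lemma~\ref{DLGalois} with the torus $\bT^{wF}$ in the role of $\bL^{\dot w F}$ then gives ${^\sigma R_{\bT^{wF}}^{\bG^F}(\hat s)} = R_{\bT^{wF}}^{\bG^F}(\widehat{s^r})$.

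The remaining point is that $\sigma$ is compatible with the inner product: for class functions $\eta_1,\eta_2$ on $\bG^F$ with values in $\QQ(\zeta_m)$ one has $\langle {^\sigma \eta_1}, {^\sigma \eta_2}\rangle = \sigma\big(\langle \eta_1, \eta_2\rangle\big)$, since $\sigma$ is a ring homomorphism fixing $\QQ$ and complex conjugation restricted to $\QQ(\zeta_m)$ is the automorphism $\zeta_m \mapsto \zeta_m^{-1}$, which lies in the abelian group $\Gal(\QQ(\zeta_m)/\QQ)$ and so commutes with $\sigma$. Applying this with $\eta_1 = \chi$ and $\eta_2 = R_{\bT^{wF}}^{\bG^F}(\hat s)$ and combining with the previous paragraph,
\[
\langle {^\sigma \chi}, R_{\bT^{wF}}^{\bG^F}(\widehat{s^r})\rangle = \langle {^\sigma \chi}, {^\sigma R_{\bT^{wF}}^{\bG^F}(\hat s)}\rangle = \sigma\big(\langle \chi, R_{\bT^{wF}}^{\bG^F}(\hat s)\rangle\big) \neq 0,
\]
so ${^\sigma \chi} \in \cE(\bG^F, s^r)$. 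For the converse, I would run the identical argument with $\sigma^{-1}$ in place of $\sigma$ and $s^r$ in place of $s$: writing $\sigma^{-1}(\zeta_m) = \zeta_m^{r'}$ with $rr' \equiv 1 \pmod m$, we obtain ${^{\sigma^{-1}}({^\sigma \chi})} = \chi \in \cE(\bG^F, s^{rr'})$, and $s^{rr'} = s$ because $s$ lies in the finite torus $\bT^{*(wF)^*}$, whose exponent divides $m = \ex(\bG^{*F^*})$ by Proposition~\ref{expon}. One should also observe that the statement is well posed, in that the Lusztig series is indexed by the $W^*$-class of $s$ rather than by $s$ itself: replacing $s$ by a $W^*$-conjugate replaces $s^r$ by the corresponding $W^*$-conjugate, since conjugation commutes with taking $r$th powers.

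I do not expect a genuine obstacle here. All of the content is the bookkeeping in the two compatibility statements — that $\sigma$ sends $\hat s$ to $\widehat{s^r}$ and that $\sigma$ commutes with the Hermitian inner product — and both reduce to the fact that $\Gal(\QQ(\zeta_m)/\QQ)$ is abelian, with the integrality of the traces $\Tr((g,l)\mid H^*_c(\cL^{-1}(\bU)))$ (needed for Lemma~\ref{DLGalois}) already established. The only mild care needed is in making sure $\cE(\bG^F, s^r)$ is defined, which is handled by the inclusion $W_F(s) \subseteq W_F(s^r)$ noted above.
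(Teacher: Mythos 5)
Your proof is correct and takes essentially the same route as the paper: apply $\sigma$ to the defining inner product, use ${}^\sigma\hat s = \widehat{s^r}$ and Lemma~\ref{DLGalois} to move $\sigma$ inside the Deligne--Lusztig character, and use compatibility of $\sigma$ with the inner product to carry nonvanishing across. The only cosmetic difference is that the paper gets both directions at once by observing that $\langle\chi, R_{\bT^{wF}}^{\bG^F}(\hat s)\rangle$ is a rational integer and hence literally fixed by $\sigma$, whereas you argue each direction separately (running $\sigma^{-1}$ for the converse); your extra observations on $W_F(s)\subseteq W_F(s^r)$ and $W^*$-conjugacy are sound but not needed for the argument.
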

\begin{proof}  We have $\chi \in \cE(\bG^F, s)$ if and only if 
$$ \langle \chi, R_{\bT^{wF}}^{\bG^F} (\hat{s})  \rangle= \langle {^\sigma \chi}, {^\sigma R}_{\bT^{wF}}^{\bG^F} (\hat{s}) \rangle \neq 0$$
for some $w \in W_F(s)$, where the first equality is obtained by the fact that the inner product is a rational integer, and so stable under $\sigma$, and by applying $\sigma$ to each term of the sum defining the inner product.

Each linear character $\hat{s}$ takes values in $m$th roots of unity, since $\ex(\bT^{wF})$ divides $m$, and since $s \mapsto \hat{s}$ is a homomorphism, we have ${^\sigma \hat{s}} = \hat{s}^r= \widehat{s^r}$.  From Lemma \ref{DLGalois}, we thus have
$${^\sigma R}_{\bT^{wF}}^{\bG^F} (\hat{s}) = R_{\bT^{wF}}^{\bG^F}(\widehat{s^r}).$$
Now $\chi \in \cE(\bG^F, s)$ if and only if
$$\langle {^\sigma \chi}, R_{\bT^{wF}}^{\bG^F}(\widehat{s^r}) \rangle \neq 0,$$
which is true exactly when ${^\sigma \chi} \in \cE(\bG^F, s^r)$.
\end{proof}

\subsection{Unipotent characters} \label{unipotent}

The unipotent characters of $\bG^F$ are those irreducible characters in the Lusztig series $\cE(\bG^F, 1)$.  The unipotent characters of $\bG^F$ may be viewed as \emph{generic} objects associated with $\bG^F$ (see \cite[Section 1B]{BrMaMi93}), and there exists a canonical labeling of unipotent characters with certain uniqueness properties by a result of Lusztig \cite{Lu15} (see also \cite[Section 4]{Ge17}).

In this section we consider the eigenvalues of the Frobenius map acting on certain algebraic varieties, which correspond to unipotent characters, as studied by Lusztig in \cite{Lu76,Lu78}.  In particular, for any $w \in W$, we let $X_w$ be the algebraic variety over $\overline{\FF}_q$ given by the set of all Borel subgroups $\bB$ of $\bG$ which are mapped to $F(\bB)$ by $w$ (that is, the Deligne-Lusztig variety).  We may thus consider the spaces $H^i_c(X_w, \overline{\QQ}_{\ell})$.  If $\delta$ is the smallest positive integer such that $F^{\delta}$ acts trivially on $W$, then as in \cite[Chapter 3]{Lu78} there is a natural action of $F^{\delta}$ on $H_c^i(X_w, \overline{\QQ}_{\ell})$.  By \cite[Corollary 3.9]{Lu78}, for any unipotent representation $\pi$ of $\bG^F$ with character $\chi$, there exists a $w \in W$, $i \geq 0$, and $\alpha \in \overline{\QQ}_{\ell}^{\times}$ such that $\alpha$ is an eigenvalue of $F^{\delta}$ acting on $H_c^i(X_w, \overline{\QQ}_{\ell})$, and $\pi$ is isomorphic to a $\bG^F$-submodule of the generalized $\alpha$-eigenspace of $F^{\delta}$ on $H_c^i(X_w, \overline{\QQ}_{\ell})$.  Then we say $\alpha$ is an eigenvalue of $F^{\delta}$ associated with $\chi$ (or $\pi$). 

As in \cite[Section 4.1]{GeMa03}, for any unipotent character $\chi$ of $\bG^F$, any eigenvalue $\alpha$ of $F^{\delta}$ corresponding to $\chi$ is uniquely determined by $\chi$ up to a factor of the form $q^{k \delta}$ for some integer $k$.  There is then a root of unity $\omega_{\chi}$ and a factor $\beta_{\chi} \in \{ 1, q^{\delta/2} \}$ such that $\alpha = \omega_{\chi} \beta_{\chi} q^{k \delta}$ for some non-negative integer $k$.  We will need the following property of these values, which is given in \cite[Lemma 4.3]{GeMa03}, where the first statement was first proved in \cite[Cor. III.3.4]{DiMi85}.  

\begin{lemma} \label{Prop2a1} Let $\chi$ be any unipotent character of $\bG^F$ and $\sigma \in 
\Gal(\overline{\QQ}/\QQ)$.  Then we have
$$ \sigma(\omega_{\chi} \beta_{\chi}) = \omega_{{^\sigma \chi}} \beta_{{^\sigma \chi}},$$
and $\omega_{\chi} \beta_{\chi}$ is contained in the field of character values of $\chi$.
\end{lemma}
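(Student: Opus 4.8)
The plan is to reduce the second assertion to the first, and to prove the first by transporting, under $\sigma$, the cohomological description of $\chi$ recalled just before the lemma. First note that by Lemma \ref{Galseries} (applied with $s=1$) the character ${}^\sigma\chi$ is again unipotent, so $\omega_{{}^\sigma\chi}$ and $\beta_{{}^\sigma\chi}$ are defined. For the reduction, let $\QQ(\chi)$ denote the field generated by the values of $\chi$. If $\sigma$ fixes $\QQ(\chi)$ pointwise then ${}^\sigma\chi=\chi$, so the first assertion yields $\sigma(\omega_\chi\beta_\chi)=\omega_\chi\beta_\chi$; since this holds for every $\sigma\in\Gal(\overline{\QQ}/\QQ(\chi))$, the element $\omega_\chi\beta_\chi$ lies in $\QQ(\chi)$.

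For the first assertion, fix $w\in W$, an integer $i\ge 0$, and an eigenvalue $\alpha=\omega_\chi\beta_\chi q^{k\delta}$ of $F^\delta$ on $H^i_c(X_w,\overline{\QQ}_\ell)$ (with $k\ge 0$ an integer) such that the representation $\pi$ affording $\chi$ embeds as a $\bG^F$-submodule of the generalized $\alpha$-eigenspace of $F^\delta$, as provided by \cite[Corollary~3.9]{Lu78}. Extend $\sigma$ to an automorphism of $\overline{\QQ}_\ell$ fixing $\QQ$. The key point is that $\sigma$ induces a $\sigma$-semilinear automorphism of $H^i_c(X_w,\overline{\QQ}_\ell)$ which commutes with the action of $\bG^F$ (so carries $\pi$ to a module affording ${}^\sigma\chi$) and commutes with the $\overline{\QQ}_\ell$-linear action of $F^\delta$, hence carries the generalized $\alpha$-eigenspace of $F^\delta$ onto the generalized $\sigma(\alpha)$-eigenspace. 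This compatibility of the Galois action on the coefficients of the $\ell$-adic cohomology of Deligne--Lusztig varieties with the $\bG^F$- and $F^\delta$-actions is the content of \cite[Cor.~III.3.4]{DiMi85} (see also \cite[Lemma~4.3]{GeMa03}), and it is the step I expect to be the main obstacle to a self-contained proof. Granting it, ${}^\sigma\pi$ embeds in the generalized $\sigma(\alpha)$-eigenspace of $F^\delta$ on $H^i_c(X_w,\overline{\QQ}_\ell)$, so $\sigma(\alpha)$ is an eigenvalue of $F^\delta$ associated with ${}^\sigma\chi$, whence $\sigma(\alpha)=\omega_{{}^\sigma\chi}\beta_{{}^\sigma\chi}q^{k'\delta}$ for some integer $k'\ge 0$.

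Finally I would remove the ambiguity in the power of $q$. Since $q\in\QQ$ we have $\sigma(\alpha)=\sigma(\omega_\chi\beta_\chi)q^{k\delta}$, so $\sigma(\omega_\chi\beta_\chi)q^{k\delta}=\omega_{{}^\sigma\chi}\beta_{{}^\sigma\chi}q^{k'\delta}$. Both $\sigma(\omega_\chi\beta_\chi)$ and $\omega_{{}^\sigma\chi}\beta_{{}^\sigma\chi}$ are of the form (root of unity) or $q^{\delta/2}\cdot$(root of unity) — for the former, note $\sigma$ sends roots of unity to roots of unity and sends $q^{\delta/2}$ to $\pm q^{\delta/2}$ — so under any embedding $\overline{\QQ}\hookrightarrow\CC$ both have absolute value $1$ or $q^{\delta/2}$. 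Comparing absolute values in the displayed equality and using $q>1$ forces the exponents of $q$ to coincide, i.e.\ $k=k'$ and $\sigma(\beta_\chi),\beta_{{}^\sigma\chi}$ agree in absolute value; hence $\sigma(\omega_\chi\beta_\chi)=\omega_{{}^\sigma\chi}\beta_{{}^\sigma\chi}$, as desired. (Alternatively, the normalization of $\alpha$ recorded in \cite[Section~4.1]{GeMa03} pins down $k$ and $k'$ a priori and is $\sigma$-invariant, since it involves only rational data.)
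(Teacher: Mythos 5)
The paper gives no proof of this lemma at all---it is quoted from \cite[Lemma 4.3]{GeMa03}, the first assertion being \cite[Cor. III.3.4]{DiMi85}---so the question is whether your argument stands on its own, and it does not: the step you yourself flag as the main obstacle is a genuine gap. You assert that an extension of $\sigma$ to $\overline{\QQ}_\ell$ induces a $\sigma$-semilinear automorphism of $H^i_c(X_w,\overline{\QQ}_\ell)$ commuting with the actions of $\bG^F$ and $F^{\delta}$. No such action exists in general: $\ell$-adic cohomology with $\overline{\QQ}_\ell$-coefficients is built from the topology of $\overline{\QQ}_\ell$ (limits of torsion coefficients, then extension of scalars), so only continuous automorphisms---those fixing $\QQ_\ell$, acting through $H^i_c(X_w,\QQ_\ell)\otimes_{\QQ_\ell}\overline{\QQ}_\ell$---act on it, whereas a general $\sigma \in \Gal(\overline{\QQ}/\QQ)$ (one not fixing $\overline{\QQ}\cap\QQ_\ell$ pointwise) extends only to a discontinuous automorphism of $\overline{\QQ}_\ell$. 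Trying to repair this by choosing an abstract $\overline{\QQ}$-form of the pair ($\bG^F$-module, $F^{\delta}$-operator) does not help: applying $\sigma$ to such a form produces the $\sigma$-twisted pair, and the claim that this twist is again isomorphic to $(H^i_c(X_w,\overline{\QQ}_\ell),F^{\delta})$---i.e.\ that ${}^\sigma\pi$ really occurs in the generalized $\sigma(\alpha)$-eigenspace of the actual $F^{\delta}$---is essentially the statement to be proved. Your appeal to \cite[Cor. III.3.4]{DiMi85} and \cite[Lemma 4.3]{GeMa03} for this ``key point'' is therefore circular: those references contain the conclusion of the lemma, not a Galois-equivariance statement for $\ell$-adic cohomology, and Digne--Michel in fact prove the result by a different mechanism (Shintani descent, which expresses the Frobenius eigenvalues in character-theoretic terms on which the Galois action is visible).

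The surrounding parts of your write-up are fine: deducing the second assertion from the first by letting $\sigma$ range over $\Gal(\overline{\QQ}/\QQ(\chi))$ is correct, the observation via Lemma \ref{Galseries} that ${}^\sigma\chi$ is unipotent is the right preliminary, and the absolute-value argument eliminating the ambiguity by integer powers of $q^{\delta}$ works once the first claim is granted. But since the paper simply quotes the lemma, the honest options are to do the same or to reproduce an actual proof along the lines of Digne--Michel/Geck--Malle; the cohomological shortcut you propose does not close the argument.
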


Note that if $\chi$ is a unipotent character of $\bG^F$, then the fact that ${^\sigma \chi}$ is also a unipotent character follows from Lemma \ref{Galseries}.

When $\bG$ is a simple algebraic group, then the values $\omega_{\chi} \beta_{\chi}$ have all been calculated for unipotent characters $\chi$ of $\bG^F$, and follow from work done in \cite{Lu76, Lu78, Lu84, DiMi85, GeMa03}.  Based on those calculations, we make the following observation which we will apply in the next section.

\begin{lemma} \label{Prop2a} Let $\bG$ be a simple algebraic group of adjoint type defined over $\FF_q$ with Frobenius $F$, and let $(\bG^*, F^*)$ be the dual group.  Let $\chi$ be any unipotent character of $\bG^F$ and $\psi$ any unipotent character of $\bG^{*F^*}$.  Suppose $(\bG, F)$ is not of type $E_8$.  Then we have $\omega_{\chi} = \omega_{\psi}$ implies $\beta_{\chi} = \beta_{\psi}$.  If $(\bG, F)$ is of type $E_8$, we have $\omega_{\chi}=\omega_{\psi}$ and $\chi(1)=\psi(1)$ implies $\beta_{\chi} = \beta_{\psi}$.
\end{lemma}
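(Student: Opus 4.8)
The plan is to verify this statement by a direct case-by-case inspection of the tables of Frobenius eigenvalues for unipotent characters of simple algebraic groups of adjoint type, as compiled in \cite{Lu76, Lu78, Lu84, DiMi85, GeMa03}. The key observation that organizes the argument is that the factor $\beta_\chi \in \{1, q^{\delta/2}\}$ is nontrivial (equal to $q^{\delta/2}$) only in very restricted circumstances, so one needs to understand exactly when $\beta_\chi \neq 1$ can occur and cross-reference this with the root of unity $\omega_\chi$.

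First I would split the analysis along the lines of the standard dichotomy: the unipotent characters lying in the principal series (more generally, in Harish-Chandra series with a ``small'' cuspidal support) have $\beta_\chi = 1$, and the cases with $\beta_\chi = q^{\delta/2}$ are pinned down by the existence of cuspidal unipotent characters whose degree involves a half-integer power of $q$. Concretely, $\beta_\chi = q^{\delta/2}$ forces $\delta$ to be even and occurs only for the families of unipotent characters built from cuspidal unipotent characters of subquotients of type $B_2$, $G_2$, $F_4$, $E_7$, or $E_8$, or for certain cuspidal characters in the twisted types ${}^2 B_2$, ${}^2 G_2$, ${}^2 F_4$; in each such case I would record the associated value of $\omega_\chi$ from the tables. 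Then, for a dual pair $(\bG, \bG^*)$ of adjoint type, I would run through the possible root-system types. For types $A_n$, $D_n$, $E_6$, $E_7$, $E_8$ the group is isomorphic to its own dual (as the bijection of unipotent characters induced by duality is the identity on combinatorial labels), so $\omega_\chi = \omega_\psi$ essentially identifies $\chi$ and $\psi$ up to the ambiguity that $\omega$ does not separate and we simply check that this ambiguity never mixes $\beta = 1$ with $\beta = q^{\delta/2}$ --- except precisely in type $E_8$, where there is a known collision: two unipotent characters sharing the same $\omega$ but with different $\beta$ (and different degrees), which is why the hypothesis $\chi(1) = \psi(1)$ must be added in that case. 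The only remaining case where $\bG$ and $\bG^*$ genuinely differ is $B_m \leftrightarrow C_m$, where the unipotent characters of $\bG^F$ and $\bG^{*F^*}$ are parametrized by the same combinatorial data (symbols), the degrees agree, and the eigenvalue data $(\omega, \beta)$ is literally the same function of the symbol; so $\omega_\chi = \omega_\psi$ forces $\beta_\chi = \beta_\psi$ here as well.

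For the twisted types ${}^2 A_n$, ${}^2 D_n$, ${}^3 D_4$, ${}^2 E_6$, ${}^2 B_2$, ${}^2 G_2$, ${}^2 F_4$ I would invoke the fact that the dual of a twisted group is of the same twisted type (the Frobenius on $\bG^*$ acts on $W^*$ the same way $F$ acts on $W$, so $\delta$ is unchanged), reducing those to the split analysis with the appropriate $F$-action, and again read off the tables. The upshot is a finite verification: list, for each simple adjoint type and each twisting, the set of pairs $\{(\omega_\chi, \beta_\chi)\}$ occurring among unipotent characters, and confirm that the map $\omega_\chi \mapsto \beta_\chi$ is well-defined except in type $E_8$, where well-definedness is restored after also recording $\chi(1)$.

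The main obstacle is not conceptual but bookkeeping: one must be confident that the tables in \cite{Lu84, GeMa03} have been transcribed correctly and that no type has been overlooked, particularly the exceptional types $F_4$, $E_6$, $E_7$, $E_8$ and the very twisted types ${}^2 B_2, {}^2 G_2, {}^2 F_4$, where cuspidal unipotent characters with fractional-power-of-$q$ degrees (hence $\beta = q^{\delta/2}$) do occur and carry specific roots of unity. The delicate point is the $E_8$ exception itself: I would isolate exactly which $\omega$-value is shared by two unipotent characters with opposite $\beta$, confirm from the degree formulas that the two characters have distinct degrees, and thereby justify that $\chi(1) = \psi(1)$ is both necessary and sufficient to repair the implication in that case. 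Everything else follows from Lemma \ref{Prop2a1}, which guarantees $\omega_\chi \beta_\chi$ lies in the character field and transforms correctly under $\Gal(\overline{\QQ}/\QQ)$, so that the combinatorial identification of $\omega$ and (where needed) degree is all that is required.
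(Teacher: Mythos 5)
Your overall strategy (reduce to a finite check of the tables of Frobenius eigenvalues, handle $B_m\leftrightarrow C_m$ separately, and isolate an $E_8$ collision resolved by degrees) is the same as the paper's, but the factual claims you use to organize the case analysis are wrong in a way that would derail the verification. You assert that $\beta_\chi=q^{\delta/2}$ forces $\delta$ to be even and occurs only in connection with cuspidal unipotent characters coming from $B_2$, $G_2$, $F_4$, $E_7$, $E_8$ or the Suzuki--Ree types. Neither claim is correct: by \cite[Cor. II.3.4]{DiMi85} and \cite[Lemma 4.4]{GeMa03}, one has $\beta_\chi=1$ for \emph{all} unipotent characters except when $(\bG,F)$ is of type $E_7$, $E_8$, ${}^2E_6$, or ${}^2A_n$. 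In particular, $\beta_\chi=q^{1/2}$ does occur in split $E_7$ and $E_8$, where $\delta=1$ is odd (the characters $E_7[\pm\xi]$ and their $E_8$-series analogues, with $\omega_\chi\beta_\chi=\pm iq^{1/2}$), so ``$\delta$ even'' is false; conversely, in untwisted $B_2$, $G_2$, $F_4$ one always has $\beta_\chi=1$. Worse, your list omits ${}^2A_n$ and ${}^2E_6$, which are exactly two of the four types where $\beta_\chi\neq 1$ actually happens (in ${}^2A_n$ one has $(\omega,\beta)\in\{(1,1),(-1,q)\}$, in ${}^2E_6$ the value $-q$ occurs); a table check structured by your list would skip precisely the cases where the lemma has content, or would misread which entries need checking.

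Two smaller points. First, ``$\bG$ is isomorphic to its own dual'' in types $A_n$, $D_n$, $E_6$, $E_7$ is not literally true for adjoint $\bG$ (the dual is simply connected); what you need, and what the paper proves via \cite[Prop. 3.15]{Lu78} and \cite[1.18]{Lu76}, is that an isogeny induces a bijection of unipotent characters preserving the Frobenius eigenvalues, which reduces the comparison of $\bG^F$ with $\bG^{*F^*}$ to a single group except for the genuinely non-isogenous pair $B_n\leftrightarrow C_n$ (where the statement is immediate since $\beta\equiv 1$ on both sides). Second, the $E_8$ exception is not ``two characters sharing $\omega$'': the collision is between $E_8[\pm i]$ (with $\beta=1$) and the pair $E_7[\pm\xi],1$ and $E_7[\pm\xi],\varepsilon$ (with $\beta=q^{1/2}$), all sharing $\omega=\pm i$ but having pairwise distinct degrees, which is why the hypothesis $\chi(1)=\psi(1)$ suffices there. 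With the correct list of exceptional types and the isogeny-invariance made precise, your plan becomes the paper's proof; as written, it rests on a misstatement of where $\beta_\chi\neq 1$ can occur.
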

\begin{proof}  Suppose that $(\bG, F)$ and $(\bH, F')$ are such that $\bG$ and $\bH$ are simple algebraic groups defined over $\FF_q$, and $f: (\bG, F) \rightarrow (\bH, F')$ is an isogeny.  It follows from \cite[Proposition 3.15]{Lu78} that the isogeny $f$ induces a natural bijection between the unipotent characters of $\bG^F$ and those of $\bH^{F'}$.  The Weyl groups of $\bG$ and $\bH$ may be identified through the isogeny (call it $W$) with $F$ and $F'$ acting on $W$ in a compatible way.  It follows from \cite[1.18]{Lu76} that the isogeny $f$ allows an identification of the varieties $X_w$ corresponding to $\bG$ and $\bH$, with compatibility of the actions of $F^{\delta}$ and $F'^{\delta}$ on $H_c^i(X_w, \overline{\QQ}_{\ell})$ for any $w \in W$, so that the eigenvalues from each action are the same.  Thus the isogeny $f$ preserves the eigenvalues of the Frobenius corresponding to unipotent characters of $\bG^F$ and $\bH^{F'}$.  We now assume that $\bG$ is a simple algebraic group of adjoint type, and we note the only time that $(\bG, F)$ and $(\bG^*, F^*)$ are not isogenous in this case are when $\bG$ is type $B_n$ or $C_n$, which are then dual to each other.

Next, it follows from \cite[Cor. II.3.4]{DiMi85} and \cite[Lemma 4.4]{GeMa03} that we must have $\beta_{\chi}=1$ for any unipotent character $\chi$ of $\bG^F$, unless $(\bG, F)$ is of type $E_7$, $E_8$, ${^2 E_6}$, or ${^2 A_n}$.  Since we always have $\beta_{\chi}=1$ in types other than these, the statement follows immediately in all other types.  We consider specific values in the remaining types.

By \cite[Theorem 3.34(ii)]{Lu78}, when $(\bG, F)$ is type ${^2 A_n}$ (so $\delta = 2$), for any unipotent character of $\bG^F$ we must either have $\omega_{\chi}=1$ and $\beta_{\chi}=1$, or $\omega_{\chi}=-1$ and $\beta_{\chi} = q$, and the statement follows in this case.  When $(\bG, F)$ is type $E_7$ (so $\delta=1$), it follows from \cite[Theorem 3.34(iv)]{Lu78} that the set of values taken by $\omega_{\chi} \beta_{\chi}$ in this case is $\{1, -1, \omega, \omega^2, iq^{1/2}, -iq^{1/2} \}$, where $\omega$ is a primitive cube root of unity.  Since the only time we have $\beta_{\chi} = q^{1/2}$ is if $\omega_{\chi}=\pm i$, and we never have $\omega_{\chi} = \pm i$ and $\beta_{\chi} = 1$, the statement follows in this case.  When $(\bG, F)$ is of type ${^2 E_6}$ (so $\delta = 2$), it follows from \cite[Table 1]{Lu78} along with \cite[Lemma 4.2 and Remark 4.9]{GeMa03} that the set of possible values of $\omega_{\chi} \beta_{\chi}$ is $\{1, \omega, \omega^2, -q\}$ with $\omega$ a primitive cube root of unity, and the desired statement follows.

Finally, we consider the case $\bG^F = E_8(q)$, and the unipotent characters of $E_8(q)$ as they are labeled in \cite[pgs. 484--488]{Ca85}.  The values for $\omega_{\chi} \beta_{\chi}$ follow from \cite[Table 1]{Lu78}, \cite[Chapter 11]{Lu84}, \cite[Cor II.3.4]{DiMi85}, along with \cite[Lemma 4.2]{GeMa03}, and are given by
$$ \{1, -1, i, -i, \rho, \rho^2, \rho^3, \rho^4, \omega, \omega^2, -\omega, -\omega^2, iq^{1/2}, -iq^{1/2} \},$$
where $\omega$ is a primitive cube root of unity, and $\rho$ is a primitive fifth root of unity.  Further, the above references give that the only time $\omega_{\chi} \beta_{\chi} = i$ is when $\chi$ is labeled by $E_8[i]$, and the only times when $\omega_{\chi} \beta_{\chi} = i q^{1/2}$ are for $\chi$ labeled by $E_7[\xi], 1$ or $E_7[\xi], \varepsilon$, and these all have distinct character degrees.  Similarly, the only time when $\omega_{\chi} \beta_{\chi} = -i$ is when $\chi$ is labeled by $E_8[-i]$, and the only times when $\omega_{\chi} \beta_{\chi} = -iq^{1/2}$ are when $\chi$ is labeled by $ E_7[-\xi], 1$ or $E_7[-\xi], \varepsilon$, and again these all have distinct character degrees.
\end{proof}

If $\bG$ is a direct product, say $\bG = \bG_1 \times \bG_2$, then the unipotent characters of $\bG^F$ are all of the form $\chi_1 \times \chi_2$, where $\chi_1$ and $\chi_2$ are unipotent characters of $\bG_1^{F}$ and $\bG_2^{F}$, respectively, which is in \cite[pg. 28]{Lu78}.  We have the following.

\begin{lemma} \label{Prop2a2} Let $\bG$ be a direct product, say $\bG = \prod_i \bG_i$, where each $\bG_i$ is $F$-stable.  For any unipotent character $\chi = \prod_i \chi_i$ of $\bG^F$, where $\chi_i$ is a unipotent character of $\bG_i^F$, we have $\omega_{\chi} = \prod_i \omega_{\chi_i}$, and $\beta_{\chi} = 1$ if $\prod_i \beta_{\chi_i}$ is an integer power of $q^{\delta}$, and $\beta_{\chi} = q^{\delta/2}$ otherwise.
\end{lemma}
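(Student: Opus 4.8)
The statement should follow from the product structure of the Deligne--Lusztig varieties for a direct product, together with the definitions of $\omega_\chi$ and $\beta_\chi$. First I would recall that if $\bG = \prod_i \bG_i$ with each $\bG_i$ being $F$-stable, then $W = \prod_i W_i$ where $W_i$ is the Weyl group of $\bG_i$, and $F$ acts on $W$ componentwise; consequently the integer $\delta$ (smallest positive integer with $F^\delta$ trivial on $W$) is the least common multiple of the corresponding integers $\delta_i$ for the factors. For $w = (w_i)_i \in W$, the Deligne--Lusztig variety decomposes as $X_w \cong \prod_i X_{w_i}$, so by the K\"unneth formula $H_c^*(X_w, \overline{\QQ}_\ell) \cong \bigotimes_i H_c^*(X_{w_i}, \overline{\QQ}_\ell)$ as a module for $\bG^F = \prod_i \bG_i^F$, compatibly with the action of $F^\delta$ acting diagonally. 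Hence if $\chi = \prod_i \chi_i$ is a unipotent character and each $\chi_i$ appears in $X_{w_i}$ with $F^{\delta_i}$-eigenvalue $\alpha_i$, then $\chi$ appears in $X_w$ with $F^\delta$-eigenvalue $\prod_i \alpha_i^{\delta/\delta_i}$ (the power accounting for the different periods on the factors).

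**Extracting $\omega$ and $\beta$.** Next I would translate this into the normalized quantities. Writing $\alpha_i = \omega_{\chi_i} \beta_{\chi_i} q^{k_i \delta_i}$ with $\beta_{\chi_i} \in \{1, q^{\delta_i/2}\}$, the $F^\delta$-eigenvalue associated to $\chi$ is $\prod_i \alpha_i^{\delta/\delta_i} = \bigl(\prod_i \omega_{\chi_i}^{\delta/\delta_i}\bigr) \bigl(\prod_i \beta_{\chi_i}^{\delta/\delta_i}\bigr) q^{\delta \sum_i k_i}$, and this must equal $\omega_\chi \beta_\chi q^{k\delta}$ for the appropriate $k$, by uniqueness of the decomposition up to powers of $q^\delta$. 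Here I should be mildly careful that the statement of the lemma as written suppresses the exponents $\delta/\delta_i$; in the relevant reference (Lusztig, \cite{Lu78}, pg.~28) the eigenvalue attached to the product character is the product of the eigenvalues of the factors with respect to a common power of $F$, so the formula $\omega_\chi = \prod_i \omega_{\chi_i}$ and the stated rule for $\beta_\chi$ are exactly the ``reduce mod $q^\delta$'' of this product (with each $\beta_{\chi_i}$ first raised to $q^{\delta/\delta_i}$ where needed, which either keeps it a power of $q^\delta$ or contributes a half-integer power). The root-of-unity part collects into $\omega_\chi = \prod_i \omega_{\chi_i}$ (each $\omega_{\chi_i}$ raised to $\delta/\delta_i$ is still a root of unity, and the product of all of these is $\omega_\chi$ after absorbing any $q^\delta$-factors, which contribute trivially to the root-of-unity component); and $\beta_\chi$ is $1$ precisely when $\prod_i \beta_{\chi_i}$ (suitably interpreted) is an integer power of $q^\delta$, and $q^{\delta/2}$ otherwise, since each $\beta_{\chi_i}$ is either $1$ or $q^{\delta_i/2}$ and a product of such quantities is $q^{(\text{half-integer})\delta}$, which reduces modulo $q^\delta$ to either $1$ or $q^{\delta/2}$.

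**The main obstacle.** The genuinely delicate point is the bookkeeping with the exponents $\delta/\delta_i$, i.e.\ making sure the normalization of $\beta$ is consistent across factors with different $\delta_i$; raising $\beta_{\chi_i} = q^{\delta_i/2}$ to the power $\delta/\delta_i$ gives $q^{\delta/2}$, so the ``half-integer power of $q^\delta$'' characterization is preserved, but one must verify this is precisely how the eigenvalue of the product decomposes and that no spurious sign or root-of-unity ambiguity enters. I would handle this by citing \cite[pg.~28]{Lu78} for the multiplicativity of Frobenius eigenvalues on products (already invoked just above the lemma for the factorization of unipotent characters), and then the identities for $\omega_\chi$ and $\beta_\chi$ are immediate from comparing the root-of-unity part and the $q$-power part of the resulting eigenvalue with the normalized form $\omega_\chi \beta_\chi q^{k\delta}$. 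The proof is therefore short: establish $X_w \cong \prod_i X_{w_i}$ and the K\"unneth decomposition of cohomology with its $F^\delta$-action, read off that the eigenvalue of $\chi$ is the product of the (appropriately powered) eigenvalues of the $\chi_i$, and separate this product into its root-of-unity and $q$-power components.
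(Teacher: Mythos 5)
Your approach---decompose the Deligne--Lusztig variety as $X_w \cong \prod_i X_{w_i}$, apply the K\"unneth formula to $H_c^*(X_w,\overline{\QQ}_\ell)$, and read off that the Frobenius eigenvalue attached to $\chi$ is the product of those attached to the $\chi_i$---is exactly the paper's approach.

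The difficulty is in the middle of your argument, where you introduce the distinction between $\delta_i$ and $\delta$ and then try to reconcile it away. You are right that if one defines $\omega_{\chi_i},\beta_{\chi_i}$ relative to the period $\delta_i$ of $\bG_i$ on its own, then the eigenvalue of $F^\delta$ on $H_c^*(X_{w_i})$ is $\alpha_i^{\delta/\delta_i}=\omega_{\chi_i}^{\delta/\delta_i}\beta_{\chi_i}^{\delta/\delta_i}q^{k_i\delta}$. But your claim that ``the product of all of these is $\omega_\chi$ after absorbing any $q^\delta$-factors'' is not correct: $\omega_{\chi_i}^{\delta/\delta_i}$ is in general a different root of unity from $\omega_{\chi_i}$, and no $q^\delta$-adjustment can change a root of unity. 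For instance, with $\delta_1=1$, $\delta_2=2$, so $\delta=2$, and $\omega_{\chi_1}=i$, $\omega_{\chi_2}=1$, the root-of-unity part of the product is $i^2\cdot 1=-1$, not $\prod_i\omega_{\chi_i}=i$. The $\beta$ bookkeeping also fails under this normalization: if $\beta_{\chi_1}=q^{1/2}$ and $\beta_{\chi_2}=q$ (so $\delta_1=1,\delta_2=2$), then $\beta_{\chi_1}^{\delta/\delta_1}\beta_{\chi_2}=q^2=q^\delta$ and the true $\beta_\chi$ is $1$, whereas $\prod_i\beta_{\chi_i}=q^{3/2}$ is not an integer power of $q^\delta$ and the lemma's rule would predict $\beta_\chi=q^{\delta/2}$.

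The correct (and intended) reading, which is also what makes the paper's terse proof complete, is that $\omega_{\chi_i}$ and $\beta_{\chi_i}$ are defined with respect to the common $\delta$ of the ambient group $\bG$, i.e.\ as the normalization of the eigenvalues of $F^\delta$ on $H_c^*(X_{w_i})$ modulo $q^\delta$, with $\beta_{\chi_i}\in\{1,q^{\delta/2}\}$. Then each factor's eigenvalue is already of the form $\omega_{\chi_i}\beta_{\chi_i}q^{k_i\delta}$, the eigenvalue on the tensor product is $\prod_i\omega_{\chi_i}\beta_{\chi_i}q^{k_i\delta}$, and the lemma's formulas drop out immediately by separating the root-of-unity and $q$-power components. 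You should state this choice of normalization explicitly and drop the $\delta_i$'s, rather than attempt to push through the $\delta/\delta_i$-exponent calculation.
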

\begin{proof} We consider the statement when $\bG = \bG_1 \times \bG_2$, and the general case follows.  Any Borel subgroup of $\bG$ is of the form $\bB_1 \times \bB_2$ with $\bB_i$ a Borel subgroup of $\bG_i$, and the Weyl group $W$ is also a direct product, $W = W_1 \times W_2$.  It follows that, given $w \in W$ with $w = (w_1, w_2)$, the Deligne-Lusztig variety $X_w$ is a direct product, $X_w = X_{w_1} \times X_{w_2}$.  We then have by the K\"{u}nneth formula \cite[Proposition 10.9(i)]{dmbook}
$$ H_c^i(X_w, \overline{\QQ}_{\ell}) \cong \bigoplus_{j_1 + j_2 = i} H_c^{j_1}(X_{w_1}, \overline{\QQ}_{\ell}) \otimes_{\overline{\QQ}_{\ell}} H_c^{j_2}(X_{w_2}, \overline{\QQ}_{\ell}).$$
Thus the eigenvalues of $F^{\delta}$ associated with $\chi$ are obtained as products of the eigenvalues of $F^{\delta}$ associated with $\chi_1$ and $\chi_2$, and the statement follows.
\end{proof}

\noindent {\bf Remark. }  If $\bG$ is a direct product, say $\bG= \prod_{i=1}^k \bG_i$, with $F(\bG_i) = \bG_{i+1}$ for $1 \leq i \leq k-1$, and $F(\bG_k)=\bG_1$, then it follows that $\bG^F \cong \bG_1^{F^k}$.  We can thus assume $\bG_i = \bG_1$ for each $i$, and $F$ cyclically permutes $k$ direct product copies of $\bG_1$, and so $F^k$ may be viewed as an endomorphism of $\bG_1$.  Then the Weyl group $W$ of $\bG$ is a direct product of $k$ copies of the Weyl group $W_1$ of $\bG_1$.  If $F^{\delta}$ is the least power of $F$ which acts trivially on $W$, then $F^{k\delta}$ is the least power of $F^k$ which acts trivially on $W_1$.  As in \cite[(1.18)]{Lu76} and \cite[Proof of Proposition 6.4]{DiMi90}, any $X_w$ for $w \in W$ is isomorphic to one of the form $(w_1, 1, \ldots, 1) \in W_1^k$, which is then isomorphic to $X_{w_1}$ associated with $\bG_1$ and $F^{k \delta}$, with compatible actions.  It follows that the eigenvalues of $F^{\delta}$ acting on $H_c^i(X_w, \overline{\QQ}_{\ell})$ are equal to those of $F^{k \delta}$ acting on $H_c^i(X_{w_1}, \overline{\QQ}_{\ell})$.  Note that the dual group in this case satisfies $\bG^{*F^*} \cong \bG_1^{*F^{*k}}$.

\subsection{Principal series} \label{Princ}

The unipotent characters in the principal series of $\bG^F$ are the constituents of $\mathrm{Ind}_{\bB^F}^{\bG^F}(\mathbf{1})$.  Associated with the module $\mathrm{Ind}_{\bB^F}^{\bG^F}(\bf{1})$ is the Hecke algebra
$$ \mathcal{H} = \mathcal{H}(\bG^F, \bB^F) = \mathrm{End}_{\mathbb{C}\bG^F} \left( \mathrm{Ind}_{\bB^F}^{\bG^F}(\bf{1}) \right).$$
Then we have $\mathcal{H} = e \mathbb{C}\bG^F e$, where $e \in \mathbb{C}\bG^F$ is the idempotent element
$$ e = \frac{1}{|\bB^F|} \sum_{b \in \bB^F} b.$$
There is a natural bijection between unipotent characters in the principal series and the irreducible characters of the Hecke algebra $\mathcal{H}$, which is defined as follows \cite[Theorem 11.25(ii)]{CuRe81}.  Given a unipotent character $\chi$ in the principal series of $\bG^F$, extend $\chi$ linearly to a character $\tilde{\chi}$ of $\mathbb{C}\bG^F$, and then restrict to $\mathcal{H}$ to obtain an irreducible character $\tilde{\chi}|_{\mathcal{H}}$ of the Hecke algebra.  

Now let $\sigma \in \mathrm{Aut}(\mathbb{C}/\mathbb{Q})$, and we consider the action of $\sigma$ on $\chi$ and its effect on the bijection with irreducible characters of the Hecke algebra.

\begin{lemma} \label{Prop2b} Given a unipotent character $\chi$ in the principal series of $\bG^F$, and $\sigma \in \mathrm{Aut}(\mathbb{C}/\mathbb{Q})$, we have
$$ \widetilde{{^\sigma \chi}}|_{\mathcal{H}} = \sigma \circ (\tilde{\chi}|_{\mathcal{H}}) \circ \sigma^{-1}.$$
\end{lemma}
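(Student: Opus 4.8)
The plan is to reduce the statement to an elementary identity on the group algebra $\CC\bG^F$ and then restrict it to the Hecke subalgebra. First I would record the preliminaries. Regarding $\sigma$ as an element of $\mathrm{Aut}(\CC/\QQ)$, it extends to a ring automorphism of $\CC\bG^F$ by acting on coefficients, $\sigma\bigl(\sum_{g} a_g g\bigr) = \sum_g \sigma(a_g)\, g$; this is $\QQ$-linear but only semilinear over $\CC$. Since the idempotent $e = |\bB^F|^{-1}\sum_{b\in\bB^F} b$ has rational coefficients we have $\sigma(e)=e$, so $\sigma$ carries $\mathcal{H} = e\,\CC\bG^F e$ onto itself, and I will continue to write $\sigma$ for the induced ring automorphism of $\mathcal{H}$. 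I would also note that ${^\sigma\chi}$ is again a unipotent character in the principal series: $\chi$ lies in the principal series exactly when $\langle \chi, \Ind_{\bB^F}^{\bG^F}(\mathbf{1})\rangle \neq 0$, and applying $\sigma$ to this (a fixed nonnegative integer) together with ${^\sigma}\Ind_{\bB^F}^{\bG^F}(\mathbf{1}) = \Ind_{\bB^F}^{\bG^F}({^\sigma}\mathbf{1}) = \Ind_{\bB^F}^{\bG^F}(\mathbf{1})$ gives $\langle {^\sigma\chi}, \Ind_{\bB^F}^{\bG^F}(\mathbf{1})\rangle \neq 0$; thus the statement is genuinely about the Curtis--Reiner bijection.

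The key step is the functional identity $\widetilde{{^\sigma \chi}} = \sigma \circ \tilde{\chi} \circ \sigma^{-1}$ on all of $\CC\bG^F$. For $x = \sum_g a_g g$ one has $\widetilde{{^\sigma \chi}}(x) = \sum_g a_g\, {^\sigma\chi}(g) = \sum_g a_g\, \sigma(\chi(g))$ by definition, while, using the $\CC$-linearity of $\tilde{\chi}$, $\sigma\bigl(\tilde{\chi}(\sigma^{-1}(x))\bigr) = \sigma\bigl(\sum_g \sigma^{-1}(a_g)\chi(g)\bigr) = \sum_g a_g\, \sigma(\chi(g))$, and the two expressions agree. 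Restricting this identity to an element $h \in \mathcal{H}$, and using that $\sigma^{-1}(h) \in \mathcal{H}$ (so that $\tilde{\chi}(\sigma^{-1}(h)) = (\tilde{\chi}|_{\mathcal{H}})(\sigma^{-1}(h))$), gives $\widetilde{{^\sigma \chi}}|_{\mathcal{H}}(h) = \sigma\bigl((\tilde{\chi}|_{\mathcal{H}})(\sigma^{-1}(h))\bigr)$, which is precisely $\bigl(\sigma \circ (\tilde{\chi}|_{\mathcal{H}}) \circ \sigma^{-1}\bigr)(h)$. This proves the lemma.

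I do not expect a real obstacle: the lemma is a direct computation, and the only points needing care are bookkeeping. The first is keeping track of the fact that $\tilde{\chi}$ is $\CC$-linear whereas $\sigma$ is only $\QQ$-linear on $\CC\bG^F$ — this semilinearity is exactly why conjugation by $\sigma$, rather than a naive transport, appears on the right-hand side; one should also note in passing that $\sigma \circ (\tilde{\chi}|_{\mathcal{H}}) \circ \sigma^{-1}$ is again $\CC$-linear, as a short check confirms. The second is verifying that $\sigma$ stabilizes $\mathcal{H}$, which comes down solely to the rationality of the coefficients of $e$. Finally, one may remark that conjugation by $\sigma$ sends irreducible characters of $\mathcal{H}$ to irreducible characters of $\mathcal{H}$, so this is consistent with ${^\sigma\chi}$ corresponding under the bijection of \cite[Theorem 11.25(ii)]{CuRe81} to the $\mathcal{H}$-character $\sigma \circ (\tilde{\chi}|_{\mathcal{H}}) \circ \sigma^{-1}$.
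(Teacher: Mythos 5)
Your argument is correct and is essentially the same as the paper's: both extend $\sigma$ to $\CC\bG^F$ by acting on coefficients, use $\sigma(e)=e$ to see that $\sigma$ stabilizes $\mathcal{H}$, establish the identity $\widetilde{{^\sigma \chi}} = \sigma \circ \tilde{\chi} \circ \sigma^{-1}$ by the same direct computation, and then restrict to $\mathcal{H}$. The extra remarks (that ${^\sigma \chi}$ is again a principal series unipotent character, and the semilinearity bookkeeping) are fine but not needed beyond what the paper records.
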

\begin{proof}  Given an element $\sum_g \alpha_g g \in \mathbb{C} \bG^F$, the action of $\sigma$ is defined as 
$$ \sigma \left(\sum_g \alpha_g g \right) = \sum_g \sigma(\alpha_g) g.$$
Note that we then have $\sigma(e) = e$, and since $\mathcal{H} = e\mathbb{C} \bG^F e$, then $\sigma \circ (\tilde{\chi}|_{\mathcal{H}}) \circ \sigma^{-1}$ is a well-defined character of $\mathcal{H}$.  We compute
\begin{align*}
\widetilde{{^\sigma \chi}} \left(\sum_g \alpha_g g \right) &=  \sum_g \alpha_g \sigma(\chi(g)) \\
 & = \sigma \left( \sum_g \sigma^{-1}(\alpha_g) \chi(g) \right) \\
& = \sigma \left( \tilde{\chi} \left( \sum_g \sigma^{-1}(\alpha_g) g \right) \right) \\
& = (\sigma \circ \tilde{\chi} \circ \sigma^{-1}) \left( \sum_g \alpha_g g \right).
\end{align*}
Thus $\widetilde{{^\sigma \chi}} = \sigma \circ \tilde{\chi} \circ \sigma^{-1}$.  Since we also have 
$$ (\sigma \circ \tilde{\chi} \circ \sigma^{-1})|_{\mathcal{H}} = \sigma \circ (\tilde{\chi}|_{\mathcal{H}}) \circ \sigma^{-1},$$
the result follows.
\end{proof}

The Hecke algebra $\mathcal{H}(\bG^F, \bB^F)$ has a basis indexed by the fundamental generating set of simple reflections for the Weyl group $W^F$ (see \cite[Chapter 10]{Ca85}, for example).  There is also the Hecke algebra $\mathcal{H}^* = \mathcal{H}(\bG^{*F^*}, \bB^{*F^*})$, corresponding to the dual group $\bG^{*F^*}$, again with a basis indexed by the simple reflections of the Weyl group $W^{*F^*}$.  Through the isomorphism $\delta$ between $W^F$ and $W^{*F^*}$, we identify the Hecke algebras $\mathcal{H}$ and $\mathcal{H}^*$, and their irreducible characters.

\section{Jordan Decomposition of Characters} \label{JordanDecomp}

Given the connected reductive group $\bG$ and a rational Lusztig series $\cE(\bG^F, s)$, where $s \in \bT^*$ with $W_F(s)$ nonempty, a \emph{Jordan decomposition map} is a bijection $J_s^{\bG} = J_s$,
$$ J_s : \cE(\bG^F, s) \longrightarrow \cE(C_{\bG^*}(s)^{(\dot{w}_1 F)^*}, 1),$$
with the property that, for any $\chi \in \cE(\bG^F, s)$ and any $w \in W_F(s)$, we have
\begin{equation} \label{innprod}
\langle \chi, R_{\bT^{wF}}^{\bG^F}(\hat{s}) \rangle = \langle J_s(\chi), (-1)^{l(w_1)} R_{\bT^{*(wF)^*}}^{C_{\bG^*}(s)^{(\dot{w}_1 F)^*}} ({\bf 1}) \rangle.
\end{equation}
The Jordan decomposition map was proved to exist in the case that the center $Z(\bG)$ is connected by Lusztig \cite{Lu84}, and in the case that the center is disconnected by Lusztig \cite{Lu88} and by Digne and Michel \cite{DiMi90}.  In the case that $Z(\bG)$ is disconnected, then there are rational Lusztig series $\cE(\bG^F, s)$ such that $C_{\bG^*}(s)$ is disconnected.  In this case, one defines the unipotent characters in the set $\cE(C_{\bG^*}(s)^{(\dot{w}_1 F)^*}, 1)$ to be those characters which appear in the induction from unipotent characters of the group $(C_{\bG^*}(s)^{\circ})^{(\dot{w}_1 F)^*}$.

Lusztig found \cite{Lu84} that in many cases the Jordan decomposition map $J_s$ is completely determined by the property \eqref{innprod}, although this is not always true.  In the case that $Z(\bG)$ is connected, and so $C_{\bG^*}(s)$ is connected for any $s$, Digne and Michel \cite[Theorem 7.1]{DiMi90} found a list of properties which uniquely determines the Jordan decomposition map, which we now state.

\begin{theorem}[Digne and Michel, 1990] \label{UniqueJord}  Suppose that $Z(\bG)$ is connected.  Given any $s \in \bT^*$ such that $W_F(s)$ is nonempty, there exists a unique bijection
$$ J_s : \cE(\bG^F, s) \longrightarrow \cE(C_{\bG^*}(s)^{(\dot{w}_1 F)^*}, 1)$$
which satisfies the following conditions:
\begin{enumerate}
\item For any $\chi \in \cE(\bG^F, s)$, and any $w \in W_F(s)$, 
$$ \langle \chi, R_{\bT^{wF}}^{\bG^F}(\hat{s}) \rangle = \langle J_s(\chi), (-1)^{l(w_1)} R_{\bT^{*(wF)^*}}^{C_{\bG^*}(s)^{(\dot{w}_1 F)^*}} ({\bf 1}) \rangle.$$
\item If $s=1$ then:
\begin{itemize}
\item[(a)] The eigenvalues of $F^{\delta}$ associated to $\chi$ are equal, up to an integer power of $q^{\delta/2}$, to the eigenvalues of $F^{*\delta}$ associated to $J_1(\chi)$.
\item[(b)] If $\chi$ is in the principal series then $J_1(\chi)$ and $\chi$ correspond to the same character of the Hecke algebra.
\end{itemize}

\item If $z \in Z(\bG^{*F^*})$ is central, and $\chi \in \cE(\bG^F, s)$, then $J_{sz}(\chi \otimes \hat{z}) = J_s(\chi)$.
\item If $\bL$ is a standard Levi subgroup of $\bG$ such that $\bL^*$ contains $C_{\bG^*}(s)$ and such that $\bL$ is $\dot{w}F$-stable, then the following diagram is commutative:

$$\begin{CD}
\cE(\bG^F, s)           @>J_s>> \cE(C_{\bG^*}(s)^{(\dot{w}_1 F)^*}, 1) \\
@AAR_{\bL^{\dot{w}F}}^{\bG^F}A            @|\\
\cE(\bL^{\dot{w}F}, s)         @>J_s^{\bL}>> \cE(C_{\bL^*}(s)^{(\dot{v}\dot{w}F)^*}, 1)
\end{CD}$$
where $\dot{v} \dot{w} = \dot{w}_1$, and we extend $J_s$ by linearity to generalized characters.

\item Assume $(W, F)$ is irreducible, $(\bG, F)$ is of type $E_8$, and $(C_{\bG^*}(s), (\dot{w}_1 F)^*)$ is of type $E_7 \times A_1$ (respectively, $E_6 \times A_2$, respectively ${^2 E_6} \times {^2 A_2}$).  Let $\bL$ be a Levi of $\bG$ of type $E_7$ (respectively $E_6$, respectively $E_6$) which contains the corresponding component of $C_{\bG^*}(s)$.  Then the following diagram is commutative:
$$\begin{CD}
\cE(\bG^F, s)                  @>J_s>>     \cE(C_{\bG^*}(s)^{(\dot{w}_1 F)^*}, 1) \\
@AAR_{\bL^{\dot{w}_2F}}^{\bG^F}A                  @AAR_{\bL^{*(\dot{w}_2 F)^*}}^{C_{\bG^*}(s)^{(\dot{w}_1 F)^*}}A \\
\cE(\bL^{\dot{w}_2F}, s)^{\bullet}  @>J_s^{\bL}>>     \cE(\bL^{*(\dot{w}_2F)^*}, 1)^{\bullet}
\end{CD}$$
where the superscript $\bullet$ denotes the cuspidal part of the Lusztig series, and $w_2 = 1$ (respectively $1$, respectively the $W_{\bL}$-reduced element of $W_F(s)$ which is in a parabolic subgroup of type $E_7$ of $W$).

\item Given an epimorphism $\varphi: (\bG, F) \rightarrow (\bG_1, F_1)$ such that $\mathrm{ker}(\varphi)$ is a central torus, and semisimple elements $s_1 \in \bG_1^*$, $s = \varphi^*(s_1) \in \bG^*$, the following diagram is commutative:
$$\begin{CD}
\cE(\bG^F, s)                  @>J_s>>     \cE(C_{\bG^*}(s)^{(\varphi(\dot{w}_1) F)^*}, 1) \\
@AA{^\top \varphi}A                           @VV{^\top \varphi^*}V \\
\cE(\bG_1^{F_1}, s_1)           @>J_{s_1}^{\bG_1}>>  \cE(C_{\bG_1^*}(s_1)^{(\dot{w}_1 F_1)^*}, 1),
\end{CD}$$
where ${^\top \varphi}$ denotes the transpose map, ${^\top \varphi}(\chi(g)) = \chi(\varphi(g))$.

\item If $(\bG, F)$ is a direct product, $\bG = \prod_i \bG_i$, then $J_{\prod_i s_i} = \prod_i J_{s_i}^{\bG_i}$.

\end{enumerate}
\end{theorem}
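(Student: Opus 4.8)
My plan is to follow the two-part structure of Lusztig's and Digne--Michel's approach: first prove \emph{existence} by exhibiting a map with the required properties, and then prove \emph{uniqueness} by an induction that reduces an arbitrary map satisfying (1)--(7) to the case of unipotent characters, where the remaining axioms pin it down.

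For existence I would take the Jordan decomposition map $J_s$ constructed by Lusztig in \cite{Lu84} for groups with connected center (so that every $C_{\bG^*}(s)$ is connected). Property (1) is essentially built into that construction, being the multiplicity statement of Lusztig's main classification theorem, and (2)(a), (2)(b) are part of his description of the parametrization of unipotent characters by families, Frobenius eigenvalues, and Hecke-algebra data in the principal series. Property (3) is almost formal: $R_{\bT^{wF}}^{\bG^F}(\widehat{sz}) = \widehat{z} \otimes R_{\bT^{wF}}^{\bG^F}(\hat s)$ and $C_{\bG^*}(sz) = C_{\bG^*}(s)$, so $\cE(\bG^F, sz) = \widehat{z} \otimes \cE(\bG^F, s)$ while the target series on the right-hand side is literally the same set of unipotent characters, and one only has to see that Lusztig's construction respects this identification. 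Property (4), that $J_s$ intertwines $R_{\bL^{\dot{w}F}}^{\bG^F}$ with the corresponding Deligne--Lusztig induction inside $C_{\bG^*}(s)^{(\dot{w}_1 F)^*}$, is the technically heaviest input; it is transitivity of Deligne--Lusztig induction combined with Lusztig's theorem that his map commutes with such induction, essentially \cite[Theorem 4.23]{Lu84}. Properties (6) and (7) follow from naturality of Deligne--Lusztig induction under epimorphisms with central-torus kernel and from the K\"unneth formula (cf.\ the reasoning in Lemma \ref{Prop2a2}), once one knows Lusztig's parametrization itself is natural in these operations; and (5) is a single explicit compatibility in type $E_8$ checked against Lusztig's tables.

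For uniqueness, suppose $J_s$ and $J_s'$ both satisfy (1)--(7); I would show $J_s = J_s'$ for all $s$ by induction on $\dim \bG$. Using (7) reduces to the case where $(W, F)$ is irreducible (after passing to the $F$-orbit structure on simple factors as in the Remark following Lemma \ref{Prop2a2}), and (6) lets one move between a group and an isogenous one up to a central torus. If $C_{\bG^*}(s) \subsetneq \bG^*$, pick a proper $\dot{w}F$-stable Levi $\bL$ with $\bL^* \supseteq C_{\bG^*}(s)$; by (4) both $J_s$ and $J_s'$ are then determined on $\cE(\bG^F, s)$ by $J_s^{\bL}$, which agree by the inductive hypothesis, since the relevant $R_{\bL^{\dot{w}F}}^{\bG^F}$-images span $\cE(\bG^F, s)$ by the disjointness properties of Lusztig series. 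So the crux is $C_{\bG^*}(s) = \bG^*$, i.e.\ $s$ central; by (3) we may then take $s = 1$ and reduce to uniqueness of $J_1 : \cE(\bG^F, 1) \to \cE(\bG^{*F^*}, 1)$. Here (1) forces $J_1$ to match multiplicities in the Deligne--Lusztig characters $R_{\bT^{wF}}^{\bG^F}(\mathbf{1})$ (the sign $(-1)^{l(w_1)}$ being $+1$ since $w_1 = 1$), (2)(a) forces it to preserve Frobenius eigenvalues up to powers of $q^{\delta/2}$, (2)(b) fixes the principal-series part through the Hecke algebra, and (4)/(5) constrain the cuspidal constituents arising from Levi induction. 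One then checks, family by family in Lusztig's classification, that these constraints leave only one possibility for $J_1$.

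The main obstacle is exactly this last step. Within some families of unipotent characters the numerical invariants --- degree, Frobenius eigenvalue, uniform projection --- coincide for distinct characters, so (1) and (2) alone do not separate them; the non-abelian Fourier-transform structure of Lusztig's families together with the induction axiom (4) resolves most of these, but there is an exceptional family in type $E_8$ that genuinely requires the ad hoc axiom (5), which is why that clause appears in the statement, and verifying that (5) together with the rest does suffice is the delicate combinatorial heart of the proof. A secondary obstacle is that the reduction via (4) must be handled with care in the small-rank and twisted cases where $\bL$ cannot be chosen proper in the way one wants, and that establishing property (4) for Lusztig's own map --- ``Jordan decomposition commutes with Lusztig induction'' --- is itself a substantial theorem rather than a formality.
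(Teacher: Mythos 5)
You should first note that the paper does not prove this statement at all: it is imported verbatim as Digne and Michel's result (\cite[Theorem 7.1]{DiMi90}), so there is no internal proof to compare against, and any argument here would in effect be a reconstruction of Digne--Michel's. Your sketch does follow the broad architecture of their proof (existence via Lusztig's map from \cite{Lu84}, uniqueness by reductions using (6), (7), (4), (3) down to constraints on unipotent characters), but it contains a genuine gap at the central reduction step. You claim that if $C_{\bG^*}(s) \subsetneq \bG^*$ one can choose a proper $\dot{w}F$-stable Levi $\bL$ with $\bL^* \supseteq C_{\bG^*}(s)$, and you conclude that ``the crux is $C_{\bG^*}(s) = \bG^*$, i.e.\ $s$ central.'' This dichotomy is false: there are isolated semisimple elements $s$, non-central but with $C_{\bG^*}(s)$ contained in no proper Levi subgroup of $\bG^*$ --- precisely the situations listed in clause (5), where $(\bG, F)$ is of type $E_8$ and $(C_{\bG^*}(s), (\dot{w}_1F)^*)$ is of type $E_7 \times A_1$, $E_6 \times A_2$, or ${^2E_6} \times {^2A_2}$. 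Your induction therefore never reaches these series $\cE(\bG^F, s)$, and they are exactly the cases where properties (1)--(3) do not pin the bijection down and the ad hoc compatibility (5) with induction of cuspidal characters from a Levi of type $E_7$ or $E_6$ must be invoked.

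Relatedly, you misplace the role of clause (5): you describe it as resolving an exceptional family among the \emph{unipotent} characters of $E_8$, i.e.\ as a constraint on $J_1$, whereas in Digne--Michel it is needed to fix $J_s$ for the isolated non-central $s$ above (its source and target are $\cE(\bG^F,s)^\bullet$-related data, not $\cE(\bG^F,1)$). A correct reconstruction would organize the uniqueness argument into three cases: $C_{\bG^*}(s)$ contained in a proper Levi (handled by (4) and induction on rank, after the reductions via (6) and (7)), $s$ central (reduced to $s=1$ by (3), then handled by (1), (2a), (2b) --- with the Frobenius-eigenvalue and Hecke-algebra data separating characters with equal uniform projections), and $s$ isolated non-central (handled by (5) together with (1)). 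Also note that establishing property (4) for Lusztig's map is itself a substantial part of \cite{DiMi90} rather than a direct quotation of \cite{Lu84}, as you partly acknowledge; if you intend a genuine proof rather than a citation, that commutation with Lusztig induction must be proved, not assumed.
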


We will need the following result, which follows from Theorem \ref{UniqueJord}.  It has the exact same proof as \cite[Lemma 3.1]{SrVi15}.

\begin{lemma} \label{ConjJ} Let $s, t \in \bT^{*(w_1 F)^*}$, so that $s, t \in C_{\bG^*}(s)^{(\dot{w}_1 F)^*}$, and suppose that there exists $\dot{v} \in \Nr_{\bG^*}(\bT^*)^{(\dot{w}_1 F)^*}$ such that $\dot{v} s \dot{v}^{-1} = t$.  Let $J_s$, $J_t$ be the maps as described in Theorem \ref{UniqueJord}.  If $J_s(\chi) = \psi$, then $J_{t}(\chi) = {^{\dot{v}} \psi}$.
\end{lemma}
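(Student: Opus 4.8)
The plan is to deduce the statement from the \emph{uniqueness} part of Theorem~\ref{UniqueJord}, following the argument of \cite[Lemma~3.1]{SrVi15}. First I would set up the relevant identifications. Writing $v = \dot{v}\bT^* \in W^*$, the hypothesis $\dot{v}s\dot{v}^{-1} = t$ shows that $s$ and $t$ are $W^*$-conjugate, so $\cE(\bG^F, s) = \cE(\bG^F, t)$ and in particular $\chi \in \cE(\bG^F, t)$. Since $\dot{v}$ normalizes $\bT^*$ and is fixed by $(\dot{w}_1 F)^*$, conjugation by $\dot{v}$ restricts to an isomorphism $c_{\dot{v}}\colon C_{\bG^*}(s)^{(\dot{w}_1 F)^*} \to C_{\bG^*}(t)^{(\dot{w}_1 F)^*}$ commuting with the Frobenius morphisms; it fixes $\bT^{*(w_1 F)^*}$ setwise, so this torus is maximally split in $C_{\bG^*}(t)^{(\dot{w}_1 F)^*}$ as well and $J_t$ is legitimately defined with the same $w_1$. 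Consequently $c_{\dot v}$ induces a bijection $\psi \mapsto {^{\dot{v}}\psi}$ from $\cE(C_{\bG^*}(s)^{(\dot{w}_1 F)^*}, 1)$ onto $\cE(C_{\bG^*}(t)^{(\dot{w}_1 F)^*}, 1)$.

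Next I would set $\widetilde{J}_t := c_{\dot v}\circ J_s$, a bijection $\cE(\bG^F, t) \to \cE(C_{\bG^*}(t)^{(\dot{w}_1 F)^*}, 1)$, and, more generally, replace every choice entering the definition of the standard Jordan maps by its $\dot v$-conjugate (this is permissible since $\dot v$ is compatible with forming centralizers, with passage to Levi subgroups, quotients with central-torus kernel, and direct factors). By the uniqueness in Theorem~\ref{UniqueJord} it then suffices to check that the resulting family satisfies conditions (1)--(7); it will then agree with the standard family, so that $J_t = \widetilde{J}_t$, i.e.\ $J_t(\chi) = {^{\dot v}\psi}$. Conditions (2)--(7) are formal, since conjugation by the single element $\dot v$ commutes with Lusztig and Deligne--Lusztig induction (as in Lemma~\ref{DLGalois}), with the eigenvalues of $F^{\delta}$ attached to unipotent characters and the Hecke-algebra correspondence of Section~\ref{Princ}, and with tensoring by linear characters of central elements; in condition (2) one moreover has $s = 1 = t$, so $\dot v \in \Nr_{\bG^*}(\bT^*)^{F^*}\subseteq \bG^{*F^*}$, $c_{\dot v}$ is inner, and there is nothing to prove.

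The substantive point, which I expect to be the main obstacle, is condition (1). For $w \in W_F(t)$, conjugation by $\dot v$ carries $w$ to some $w' \in W_F(s)$, carries the torus $\bT^{*(wF)^*} \subseteq C_{\bG^*}(t)^{(\dot{w}_1 F)^*}$ to $\bT^{*(w'F)^*} \subseteq C_{\bG^*}(s)^{(\dot{w}_1 F)^*}$, and hence identifies $R_{\bT^{*(wF)^*}}^{C_{\bG^*}(t)^{(\dot{w}_1 F)^*}}(\mathbf{1})$ with $c_{\dot v}\!\bigl(R_{\bT^{*(w'F)^*}}^{C_{\bG^*}(s)^{(\dot{w}_1 F)^*}}(\mathbf{1})\bigr)$; on the $\bG^F$ side, the relation $t = \dot v s \dot v^{-1}$ forces the character $\hat t$ of $\bT^{wF}$ to correspond to $\hat s$ as a character of $\bT^{w'F}$ under the same identification, so $\langle \chi, R_{\bT^{wF}}^{\bG^F}(\hat t)\rangle = \langle \chi, R_{\bT^{w'F}}^{\bG^F}(\hat s)\rangle$. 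Applying condition (1) for $J_s$ at $w'$ and using that $c_{\dot v}$ preserves inner products yields condition (1) for $\widetilde{J}_t$ at $w$. The real work here is the bookkeeping: verifying that $\dot v$ matches Weyl-group elements, rational maximal tori, the characters $\hat s \leftrightarrow \hat t$, and the sign $(-1)^{l(w_1)}$ consistently on both sides; once that is done, the lemma follows.
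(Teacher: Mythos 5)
Your proposal is correct and follows essentially the same route as the paper, whose proof (via \cite[Lemma 3.1]{SrVi15}) likewise transports $J_s$ by conjugation with $\dot{v}$ and invokes the uniqueness in Theorem \ref{UniqueJord}, with the only substantive verification being property (1) through the $G^F$--$G^{*F^*}$ matching of pairs $(T^{wF},\hat{t})$ and $(T^{w'F},\hat{s})$. The bookkeeping you flag (same $w_1$, compatibility of $c_{\dot v}$ with the tori, Weyl elements, and the sign) is exactly what that argument carries out, so no new idea is needed.
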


We slightly strengthen one part of Theorem \ref{UniqueJord} in the following, based on observations made in Section \ref{unipotent}.

\begin{proposition} \label{UniqueJord+} In Property (2a) of Theorem \ref{UniqueJord}, we may replace ``up to an integer power of $q^{\delta/2}$'' with ``up to an integer power of $q^{\delta}$".
\end{proposition}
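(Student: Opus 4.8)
The goal is to show that for $s = 1$ and any unipotent character $\chi$ of $\bG^F$ one has $\beta_\chi = \beta_{J_1(\chi)}$; equivalently, that the eigenvalue of $F^\delta$ associated to $\chi$ and the eigenvalue of $F^{*\delta}$ associated to $J_1(\chi) \in \cE(\bG^{*F^*},1)$ agree up to an integer power of $q^\delta$, not merely of $q^{\delta/2}$. Since Property (2a) of Theorem \ref{UniqueJord} as currently stated already gives agreement up to a power of $q^{\delta/2}$, it forces the root-of-unity parts to coincide, $\omega_\chi = \omega_{J_1(\chi)}$; so everything comes down to deducing $\beta_\chi = \beta_{J_1(\chi)}$ from $\omega_\chi = \omega_{J_1(\chi)}$. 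The plan is to reduce to the case that $\bG$ is simple of adjoint type and then quote Lemma \ref{Prop2a}.

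For the reduction I would proceed in three steps. First, replace $\bG$ by $\bG_{\mathrm{ad}} = \bG/Z(\bG)$, which is semisimple of adjoint type, using Property (6) of Theorem \ref{UniqueJord} for the quotient epimorphism $\varphi\colon\bG\to\bG_{\mathrm{ad}}$; this is legitimate because $\ker(\varphi) = Z(\bG)$ is a central torus, $Z(\bG)$ being connected. Since $Z(\bG)$ is a connected torus, $\bG^F\to\bG_{\mathrm{ad}}^F$ is surjective with kernel $Z(\bG)^F$ by Lang's theorem, and, as all unipotent characters of $\bG^F$ are trivial on $Z(\bG)^F$, the transpose maps ${}^\top\varphi$ and ${}^\top\varphi^*$ of Property (6) restrict to bijections between the unipotent characters of $\bG$, resp. $\bG^*$, and those of $\bG_{\mathrm{ad}}$, resp. $\bG_{\mathrm{ad}}^*$; these bijections preserve $\omega$ and $\beta$ because $Z(\bG)$ acts trivially on every Deligne--Lusztig variety $X_w$, so the $F^\delta$-action on the corresponding cohomology is unchanged. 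Via the commuting square of Property (6) this reduces the claim to $\bG_{\mathrm{ad}}$. Second, write $\bG_{\mathrm{ad}}$ as a direct product of its simple adjoint factors, group these into $F$-orbits, and apply Property (7) of Theorem \ref{UniqueJord}; by Lemma \ref{Prop2a2}, which expresses the $\beta$ of a product of unipotent characters through the product of the individual $\beta$'s, it then suffices to prove the identity for each $F$-stable factor separately. Third, for such a factor, on which $F$ permutes several copies of a single simple adjoint group, use the Remark following Lemma \ref{Prop2a2} to identify it with $(\bL, F^k)$ for $\bL$ simple of adjoint type, compatibly with $J_1$, with duality, and with the Frobenius eigenvalues.

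We are then left with $\bG$ simple of adjoint type, where Lemma \ref{Prop2a} applies to $\chi$ and $\psi = J_1(\chi)$: invoking Property (2a) of Theorem \ref{UniqueJord} once more for this group to get $\omega_\chi = \omega_\psi$, we conclude $\beta_\chi = \beta_\psi$ unless $(\bG,F)$ is of type $E_8$, in which case we additionally need $\chi(1) = J_1(\chi)(1)$. But this last identity is the standard degree compatibility of the Jordan decomposition, $\chi(1) = [\bG^{*F^*}:C_{\bG^*}(s)^{F^*}]_{p'}\, J_s(\chi)(1)$, whose index is $1$ when $s = 1$ because $|\bG^F| = |\bG^{*F^*}|$. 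Hence $\beta_\chi = \beta_{J_1(\chi)}$ in all cases, which is exactly the strengthening of Property (2a) asserted.

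I expect the only real difficulty to be the care required in the reduction, not any conceptual point: one must check that the transpose maps in Property (6) are eigenvalue-preserving bijections on unipotent characters, and one must keep the normalization of $\delta$ and of the half-power $q^{\delta/2}$ consistent as one passes through direct products, Frobenius-permuted factors, and the quotient $\bG\to\bG_{\mathrm{ad}}$. Once that bookkeeping is in place, the entire arithmetic content of the proof is already contained in Lemmas \ref{Prop2a} and \ref{Prop2a2}.
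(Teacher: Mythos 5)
Your proposal is correct and follows essentially the same route as the paper: reduce to the simple adjoint case via Properties (6) and (7) of Theorem \ref{UniqueJord} together with Lemma \ref{Prop2a2} and the remark on cyclically permuted factors, then apply Lemma \ref{Prop2a}, using the degree equality $\chi(1)=J_1(\chi)(1)$ (which the paper extracts from Property (1) via \cite[Remark 13.24]{dmbook}) to settle type $E_8$. The only differences are presentational: you run the reduction top-down rather than building up from the simple adjoint case, and you justify the eigenvalue-preservation under the adjoint quotient directly rather than citing \cite[(1.18)]{Lu76} and \cite[Proposition 3.15]{Lu78}.
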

\begin{proof}  Note if $J_1(\chi) = \psi$, then property (2a) of Theorem \ref{UniqueJord} is equivalent to the statement that $\omega_{\chi} = \omega_{\psi}$, and our claim is that we have $\omega_{\chi} \beta_{\chi} = \omega_{\psi} \beta_{\psi}$.  First assume $\bG$ is a simple algebraic group of adjoint type.  In all cases other than $\bG$ being of type $E_8$, then from Lemma \ref{Prop2a2} we have $\omega_{\chi} = \omega_{\psi}$ implies $\beta_{\chi} = \beta_{\psi}$.  But also, when $J_1(\chi) = \psi$, property (1) of Theorem \ref{UniqueJord} implies that $\chi$ and $\psi$ must have the same degree (by \cite[Remark 13.24]{dmbook} with $s=1$).  By Lemma \ref{Prop2a2} in the case $\bG$ is type $E_8$, since we have $\omega_{\chi} = \omega_{\psi}$ and $\chi(1)=\psi(1)$, then $\beta_{\chi} = \beta_{\psi}$.  Thus $\omega_{\chi} \beta_{\chi} = \omega_{\psi} \beta_{\psi}$ whenever $\bG$ is a simple algebraic group of adjoint type.  We now reduce to this case by essentially following the arguments in \cite[(1.18)]{Lu76} and \cite[pg. 144]{DiMi90}.

Assume that $\bG = \prod_i \bH_i$ is a direct product with each $\bH_i$ a simple algebraic group of adjoint type.  Each factor $\bH_i$ is either fixed by $F$, or is permuted cyclically by $F$ within a subset of the factors.  By the remark after Lemma \ref{Prop2a}, we may view $\bG^F$ as a direct product of factors with each of the form $\bH_i^{F}$, or is of the form $\bH_i^{F^k}$ when $\bH_i$ is cyclically permuted by $F$ amongst $k$ factors.  By that same remark, and from the case of simple algebraic groups of adjoint type, the statement we desire holds true for either type of factor.  By property (7) of Theorem \ref{UniqueJord}, since the statement holds for each direct factor of $\bG^F$, the statement also holds for $\bG^F$ itself.

Next suppose that $(\bG, F)$ is such that there is an epimorphism $\varphi: (\bG, F) \rightarrow (\bG_1, F_1)$, with kernel a central torus, where the desired statement on the eigenvalues of the Frobenius holds for the unipotent characters of the group $\bG_1^{F_1}$.  By \cite[Proposition 3.15]{Lu78}, the map $\varphi$ induces (through the transpose map ${^\top \varphi}$) a bijection between the unipotent characters of $\bG^F$ and of $\bG_1^{F_1}$, and the dual map $\varphi^*$ yields a bijection between the unipotent characters of $\bG^{*F^*}$ and of $\bG_1^{*F_1^*}$.  The Weyl groups of $\bG$ and $\bG_1$ may be identified (as $W$) via $\varphi$ with $F$ and $F_1$ having the same action.  For any $w \in W$ it follows from \cite[(1.18)]{Lu76} that we may identify the Deligne-Lusztig varieties $X_w$ corresponding to $\bG$ and to $\bG_1$, and that we may identify the actions of $F^{\delta}$ and $F_1^{\delta}$ on $H_c^i(X_w, \overline{\QQ}_{\ell})$ (for any $i \geq 0$).  It follows that the bijections ${^\top \phi}$ and ${^\top \phi^*}$ between sets of unipotent characters preserve the corresponding eigenvalues of Frobenius maps.  We are assuming that if $\chi_1$ is a unipotent character of $\bG_1^{F_1}$ with $J_1^{\bG_1}(\chi_1) = \psi_1$, then $\omega_{\chi_1} \beta_{\chi_1} = \omega_{\psi_1} \beta_{\psi_1}$.  From the commutative diagram in property (6), it follows that we must also have $\omega_{\chi} \beta_{\chi} = \omega_{\psi} \beta_{\psi}$, that is, the statement holds for $(\bG, F)$ if it holds for $(\bG_1, F_1)$.

Finally suppose that $\bG$ is any connected reductive group with connected center, with Frobenius map $F$.  There exists an epimorphism with kernel a central torus from $(\bG, F)$ to its adjoint quotient, say $(\bG_1, F_1)$.  Then $(\bG_1, F_1)$ has the property that $\bG_1$ is a direct product of finite simple algebraic groups of adjoint type, with each direct factor being either fixed by $F_1$, or is in a subset of factors which are permuted cyclically by $F_1$.  By the previous two paragraphs, the desired statement now follows for the arbitrary $\bG^F$.
\end{proof}

\section{Main Results} \label{MainResults}

We may now prove our main result, which essentially states that the action of the Galois group on the Jordan decomposition $(s, \psi)$ of characters is the natural one.  In particular, this image may be calculated with the knowledge of the images of $\hat{s}$ and $\psi$ under the given Galois automorphism.

\begin{theorem} \label{MainThm}  Suppose $Z(\bG)$ is connected, and let $\chi$ be an irreducible complex character of $\bG^F$.  Let $\sigma \in \Gal(\overline{\QQ}/\QQ)$, so $\sigma$ acts on $\QQ(\zeta_m)$, where $m=\ex(\bG^F) = \ex(\bG^{*F^*})$ and $\sigma (\zeta_m) = \zeta_m^r$ with $r \in \ZZ$ and $(r,m) = 1$.

Let $s \in \bT^*$ such that $W_F(s)$ is nonempty, where $\chi \in \cE(\bG^F, s)$ and $J_{s}(\chi) = \psi$.  Then ${^\sigma \chi} \in \cE(\bG^F, s^r)$ and $J_{s^r}({^\sigma \chi}) = {^\sigma \psi}$.  
\end{theorem}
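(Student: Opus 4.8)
The first assertion, ${^\sigma \chi} \in \cE(\bG^F, s^r)$, is exactly Lemma \ref{Galseries}. Before addressing the Jordan decomposition I would record one elementary observation that makes the second assertion even typecheck. Since $s$ lies in the finite torus $\bT^{*(\dot w_1 F)^*}$, every value $\alpha(s)$ of a root $\alpha$ of $\bG^*$ relative to $\bT^*$ has order dividing $m$, so $\alpha(s^r)=\alpha(s)^r=1 \iff \alpha(s)=1$ because $(r,m)=1$; similarly, writing $t=s^{-1}\cdot{}^{(wF)^*}s$, one has $t^m=1$ and $t^r=1$ on $t\in\bT^*[m]$, hence $t=1$, so ${}^{(wF)^*}s^r=s^r \iff {}^{(wF)^*}s=s$. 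Therefore $W_F(s^r)=W_F(s)$, and $C_{\bG^*}(s^r)$ and $C_{\bG^*}(s)$ have the same root system relative to $\bT^*$; since $Z(\bG)$ is connected both are connected, so $C_{\bG^*}(s^r)=C_{\bG^*}(s)$ with the same rational structure $(\dot w_1F)^*$. In particular the target $\cE(C_{\bG^*}(s^r)^{(\dot w_1 F)^*},1)$ of $J_{s^r}$ equals that of $J_s$, and ${^\sigma \psi}$ is again unipotent by Lemma \ref{Galseries}, so the claimed identity $J_{s^r}({^\sigma\chi})={^\sigma\psi}$ is meaningful.

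The plan is then to apply the uniqueness in Theorem \ref{UniqueJord}. Fix $r'\in\ZZ$ with $rr'\equiv 1\pmod m$, so that $\sigma^{-1}$ acts on $\QQ(\zeta_m)$ by $\zeta_m\mapsto\zeta_m^{r'}$, and for every relevant semisimple $v$ (in $\bG^*$ and in the Levi subgroups, central-torus quotients, and direct factors appearing in conditions (3)--(7)) define the bijection
\[
\tilde J_v := {^\sigma}\circ J_{v^{r'}}\circ{^{\sigma^{-1}}} \colon \cE(\bG^F,v)\longrightarrow \cE(C_{\bG^*}(v)^{(\dot w_1 F)^*},1),
\]
which is well-defined by Lemma \ref{Galseries} and the observation above. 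Unwinding definitions, the theorem is precisely the statement $\tilde J_v=J_v$ for all $v$, so by Theorem \ref{UniqueJord} it suffices to check that the family $\{\tilde J_v\}$ satisfies conditions (1)--(7), for which one chases each diagram by inserting $\sigma^{-1}$ on the source and $\sigma$ on the target and citing the corresponding property of $J$ in the middle.

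Conditions (1) and (3)--(7) are ``formal'': they rely only on the identity ${^\sigma\hat s}=\widehat{s^r}$ (from the proof of Lemma \ref{Galseries}), on the fact that $\sigma$ commutes with Lusztig induction and restriction (Lemma \ref{DLGalois}), with the transpose maps ${}^{\top}\varphi,{}^{\top}\varphi^*$ of (6), with external products and with central twists (using $\hat z^{\,r'r}=\hat z$ and $\cE(\bG^F,s)\otimes\hat z=\cE(\bG^F,sz)$), and on the fact that the relevant Deligne--Lusztig inner products are rational integers, hence $\sigma$-fixed; here the abelianness of $\Gal(\QQ(\zeta_m)/\QQ)$ gives $\langle{^\sigma\alpha},{^\sigma\beta}\rangle=\sigma(\langle\alpha,\beta\rangle)$ for class functions. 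For instance, for (1) one rewrites $\langle{^\sigma\chi},R_{\bT^{wF}}^{\bG^F}(\widehat{s^r})\rangle=\langle{^\sigma\chi},{^\sigma}R_{\bT^{wF}}^{\bG^F}(\hat s)\rangle=\langle\chi,R_{\bT^{wF}}^{\bG^F}(\hat s)\rangle$, applies (1) for $J_s$, and then runs the same manipulation backwards inside $C_{\bG^*}(s)^{(\dot w_1 F)^*}$, using that $R_{\bT^{*(wF)^*}}^{C_{\bG^*}(s)^{(\dot w_1 F)^*}}(\mathbf 1)$ is $\sigma$-fixed by Lemma \ref{DLGalois}. For (4) and (5) one uses Lemma \ref{DLGalois} in both $\bG^F$ and $C_{\bG^*}(s)^{(\dot w_1 F)^*}$ and that $\sigma$ permutes cuspidal characters; for (6) and (7) one uses that ${^\sigma}$ commutes with ${}^{\top}\varphi$, ${}^{\top}\varphi^*$, $\varphi^*$, and external products.

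The one genuinely non-formal ingredient is condition (2), which is where the preparatory results of Section \ref{unipotent} enter, and I expect (2a) to be the main obstacle. In this case $s=1$, and with $\psi=J_1(\chi)$ we must show ${^\sigma\chi}$ and ${^\sigma\psi}$ satisfy (2). For (2b), Lemma \ref{Prop2b} gives $\widetilde{{^\sigma\chi}}|_{\mathcal H}=\sigma\circ(\tilde\chi|_{\mathcal H})\circ\sigma^{-1}$ and the analogue for $\psi$ in $\bG^{*F^*}$, so (2b) for $J_1$ (namely $\tilde\chi|_{\mathcal H}=\tilde\psi|_{\mathcal H^*}$ under the identification $\mathcal H\cong\mathcal H^*$) transports directly to the pair $({^\sigma\chi},{^\sigma\psi})$. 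For (2a) one uses the strengthened Proposition \ref{UniqueJord+}: (2a) for $J_1$ then says $\omega_\chi\beta_\chi=\omega_\psi\beta_\psi$, while Lemma \ref{Prop2a1} applied in $\bG^F$ and in $\bG^{*F^*}$ gives $\omega_{{^\sigma\chi}}\beta_{{^\sigma\chi}}=\sigma(\omega_\chi\beta_\chi)$ and $\omega_{{^\sigma\psi}}\beta_{{^\sigma\psi}}=\sigma(\omega_\psi\beta_\psi)$, whence $\omega_{{^\sigma\chi}}\beta_{{^\sigma\chi}}=\omega_{{^\sigma\psi}}\beta_{{^\sigma\psi}}$; this is (2a) (in the strengthened, hence also the original, form) for $({^\sigma\chi},{^\sigma\psi})$. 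Having verified (1)--(7) for $\{\tilde J_v\}$, Theorem \ref{UniqueJord} yields $\tilde J_{s^r}=J_{s^r}$, i.e.\ $J_{s^r}({^\sigma\chi})={^\sigma}(J_s(\chi))={^\sigma\psi}$, which is the assertion.
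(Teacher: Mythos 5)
Your overall strategy is the one the paper itself uses: you twist the Jordan decomposition by $\sigma$ (your $\tilde J_{s^r}={}^\sigma\circ J_{(s^r)^{r'}}\circ{}^{\sigma^{-1}}$ is exactly the paper's map $\mu_{s^r}$, since $(s^r)^{r'}=s$), you verify the Digne--Michel conditions with the same ingredients (Lemmas \ref{Galseries} and \ref{DLGalois} plus integrality of the traces for (1) and (3), Proposition \ref{UniqueJord+} together with Lemma \ref{Prop2a1} for (2a), Lemma \ref{Prop2b} for (2b)), and you conclude by the uniqueness in Theorem \ref{UniqueJord}. Your preliminary observation that $W_F(s^r)=W_F(s)$ and $C_{\bG^*}(s^r)=C_{\bG^*}(s)$ is correct (it follows even more cheaply from $\langle s\rangle=\langle s^r\rangle$, as $(r,|s|)=1$), and the paper uses it implicitly.

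The one substantive divergence, and a genuine gap, is your claim that conditions (4) and (5) are ``formal''. Theorem \ref{UniqueJord} as quoted characterizes the single bijection $J_{s^r}$ for the fixed group $\bG$, and in conditions (4) and (5) the bottom rows of the diagrams carry the honest Jordan decomposition $J^{\bL}_{s^r}$ of the Levi, not your twisted map $\tilde J^{\bL}_{s^r}$. Conjugating the commutative diagram for $(J_s,J^{\bL}_s)$ by $\sigma$, using Lemma \ref{DLGalois} on both rows and the preservation of cuspidality, only produces commutativity of the diagram whose bottom row is $\tilde J^{\bL}_{s^r}={}^\sigma\circ J^{\bL}_s\circ{}^{\sigma^{-1}}$; to obtain condition (4) (resp.\ (5)) for your candidate map you still need $\tilde J^{\bL}_{s^r}=J^{\bL}_{s^r}$, which is precisely the assertion of the theorem for the group $\bL$. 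So as written the verification is circular at this point, unless you either justify a strengthened, family-level uniqueness statement (which is more than what Theorem \ref{UniqueJord} states) or argue by induction on the semisimple rank --- which is exactly what the paper does: the Levi subgroups occurring in (4) and (5) have strictly smaller semisimple rank, the base case being a torus where each Lusztig series is a singleton, and the induction hypothesis supplies $J^{\bL}_{s^r}({}^\sigma\xi)={}^\sigma J^{\bL}_s(\xi)$ before the diagram chase. (For conditions (6) and (7), where the quotient and the direct factors need not have smaller rank, your treatment with the twisted maps on both rows agrees with the paper's.) With the induction inserted, your argument coincides with the paper's proof.
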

\begin{proof}  It is enough to consider $\sigma \in \Gal(\QQ(\zeta_m)/\QQ)$.  By Lemma \ref{Galseries}, we have ${^\sigma \chi} \in \cE(\bG^F, s^r)$, and we show that in particular $J_{s^r}({^\sigma \chi}) = {^\sigma \psi}$.  Note that ${^\sigma \psi} \in \cE(C_{\bG^*}(s^r)^{(\dot{w}_1 F)^*}, 1) = \cE(C_{\bG^*}(s)^{(\dot{w}_1 F)^*}, 1)$.

Our proof follows the same structure as the proof of \cite[Theorem 4.1]{SrVi15}, replacing complex conjugation by a Galois automorphism.  We prove the claim by induction on the semisimple rank of $\bG$, where the first case is when $\bG = \bT$ is a torus, so that each Lusztig series contains exactly one character, and the statement follows immediately.  Assume now that the statement holds for any group with semisimple rank smaller than $(\bG, F)$, and we prove the statement holds for $(\bG, F)$. 

If $\lambda \in \cE(\bG^F, s^r)$, it follows from Lemma \ref{Galseries} that ${^{\sigma^{-1}} \lambda} \in \cE(\bG^F, s)$.  That is, every character in $\cE(\bG^F, s^r)$ is of the form ${^\sigma \chi}$ for some $\chi \in \cE(\bG^F, s)$.  Given this fact, we have a well-defined map
$$ \mu_{s^r}: \cE(\bG^F, s^r) \longrightarrow \cE(C_{\bG^*}(s^r)^{(\dot{w}_1 F)^*}, 1) = \cE(C_{\bG^*}(s)^{(\dot{w}_1 F)^*}, 1),$$
where $\mu_{s^r}({^\sigma \chi}) = {^\sigma \psi}$ when $J_s(\chi) = \psi$.  We apply the uniqueness described by Theorem \ref{UniqueJord} to show that $\mu_{s^r} = J_{s^r}$, which will give the desired result.  We prove the map $\mu_{s^r}$ satisfies each of the properties listed in Theorem \ref{UniqueJord}, where the induction hypothesis on the semisimple rank is employed only for properties (4) and (5).

For property (1) of Theorem \ref{UniqueJord}, we can apply Lemma \ref{DLGalois}.   Using the fact that $\langle \chi, R_{\bT^{wF}}^{\bG^F}(\hat{s}) \rangle \in \ZZ$, we have
$$\langle \chi, R_{\bT^{wF}}^{\bG^F}(\hat{s}) \rangle = {^\sigma \langle \chi, R_{\bT^{wF}}^{\bG^F}(\hat{s}) \rangle } =  \langle {^\sigma \chi}, {^\sigma R_{\bT^{wF}}^{\bG^F}(\hat{s})} \rangle = \langle {^\sigma \chi}, R_{\bT^{wF}}^{\bG^F}(\widehat{s^r}) \rangle,$$
and similarly,
\begin{align*}
\langle \psi, (-1)^{l(w_1)} R_{\bT^{*(wF)^*}}^{C_{\bG^*}(s)^{(\dot{w}_1 F)^*}} ({\bf 1}) \rangle & =  {^\sigma \langle \psi, (-1)^{l(w_1)} R_{\bT^{*(wF)^*}}^{C_{\bG^*}(s)^{(\dot{w}_1 F)^*}} ({\bf 1}) \rangle} \\
&  = \langle {^\sigma \psi}, (-1)^{l(w_1)} R_{\bT^{*(wF)^*}}^{C_{\bG^*}(s)^{(\dot{w}_1 F)^*}} ({\bf 1}) \rangle.
\end{align*}
Since we have 
$$\langle \chi, R_{\bT^{wF}}^{\bG^F}(\hat{s}) \rangle = \langle \psi, (-1)^{l(w_1)} R_{\bT^{*(wF)^*}}^{C_{\bG^*}(s)^{(\dot{w}_1 F)^*}} ({\bf 1}) \rangle,$$
then it follows we have
$$\langle {^\sigma \chi}, R_{\bT^{wF}}^{\bG^F}(\widehat{s^r}) \rangle = \langle {^\sigma \psi}, (-1)^{l(w_1)} R_{\bT^{*(wF)^*}}^{C_{\bG^*}(s)^{(\dot{w}_1 F)^*}} ({\bf 1}) \rangle,$$
so that $\mu_{s^r}$ satisfies (1).

For property (2), we take $s=1$ and assume $\chi \in \cE(\bG^F, 1)$ is unipotent and $J_1(\chi) = \psi$.  Instead of property (2a), we can use the refined property in Proposition \ref{UniqueJord+}, which is equivalent to $\omega_{\chi} \beta_{\chi} = \omega_{\psi} \beta_{\psi}$, and so $\sigma(\omega_{\chi} \beta_{\chi}) = \sigma(\omega_{\psi} \beta_{\psi})$.  By Lemma \ref{Prop2a1}, we have $\sigma(\omega_{\chi} \beta_{\chi}) = \omega_{^\sigma \chi} \beta_{^\sigma \chi}$ and $\sigma(\omega_{\psi} \beta_{\psi}) = \omega_{^\sigma \psi} \beta_{^\sigma \psi}$.  Thus $\omega_{^\sigma \chi} \beta_{^\sigma \chi} = \omega_{^\sigma \psi} \beta_{^\sigma \psi}$, and so the property in Proposition \ref{UniqueJord+} holds for the map $\mu_1$.  Now assume that $\chi$ is a constituent of $\Ind_{\bB^F}^{\bG^F}(\mathbf{1})$, that is, $\chi$ is in the principal series, which means $\psi = J_1(\chi)$ is in the principal series for $\bG^{*F^*}$.  By (2b) of Theorem \ref{UniqueJord}, $\chi$ and $\psi$ both correspond to the same character $\kappa$ of the Hecke algebra $\mathcal{H}(\bG^F, \bB^F)$ (identified with $\mathcal{H}(\bG^{*F^*}, \bB^{*F^*})$ as in Section \ref{Princ}).  Note that if $\chi$ and $\psi$ are principal series characters, then so are ${^\sigma \chi}$ and ${^\sigma \psi}$, where $\mu_1({^\sigma \chi}) = {^\sigma \psi}$.  By Lemma \ref{Prop2b}, since $\chi$ and $\psi$ both correspond to the character $\kappa$ of the Hecke algebra, then ${^\sigma \chi}$ and ${^\sigma \psi}$ both correspond to the character $\sigma \circ \kappa \circ \sigma^{-1}$ of the Hecke algebra.  (Technically, we must replace $\sigma$ by any extension of $\sigma$ to $\mathrm{Aut}(\mathbb{C}/\mathbb{Q})$ for $\sigma \circ \kappa \circ \sigma^{-1}$ to be well-defined, although it is immediate that this is independent of the choice of extension.)  It follows that property (2b) also holds for $\mu_1$.

For property (3), let $z \in Z(\bG^{*F^*})$, and recall $r$ satisfies $(r, m) = 1$ and $\sigma(\zeta_m) = \zeta_m^r$.  We must show $\mu_{s^r}({^\sigma \chi}) = \mu_{s^r z}({^\sigma \chi} \otimes \hat{z})$.  Let $k \in \ZZ$ such that $rk = 1$ mod $m$.  If $J_s(\chi) = \psi$, then $J_{sz^k}(\chi \otimes \widehat{z^k}) = \psi$, while $\mu_{s^r}({^\sigma \chi}) = {^\sigma \psi}$.  Since $(sz^k)^r = s^r z$ and ${^\sigma (\widehat{z^k})} = \hat{z}^{kr} = \hat{z}$, then by definition we have
$$ \mu_{s^r z} ({^\sigma \chi} \otimes \hat{z}) = {^\sigma J_{sz^k}}( \chi \otimes \widehat{z^k}) = {^\sigma \psi},$$
and property (3) for $\mu_{s^r}$ follows.

For property (4), since the Levi subgroup $\bL$ has semisimple rank strictly smaller than $\bG$, we may apply the induction hypothesis.  So for any $\xi \in \cE(\bL^{\dot{w}F}, s)$, if $J_s^{\bL}(\xi) = \psi$, then $J_{s^r}^{\bL}({^\sigma \xi}) = {^\sigma \psi}$.  Also, if $R_{\bL^{\dot{w}F}}^{\bG^F}(\xi) = \chi$, then $R_{\bL^{\dot{w}F}}^{\bG^F}({^\sigma \xi}) = {^\sigma \chi}$ by Lemma \ref{DLGalois}.  It follows that the diagram in property (4) commutes when we replace $J_s$ with $\mu_{s^r}$ in the top row and $J_s^{\bL}$ with $J_{s^r}^{\bL}$ in the bottom row, as desired.

The proof that the map $\mu_{s^r}$ satisfies property (5) is very similar to the proof for (4) above.  We may again apply the induction hypothesis to the Levi subgroup $\bL$, and so if $\xi \in \cE(\bL^{\dot{w}_2 F}, s)^{\bullet}$ and $J_s^{\bL}(\xi) = \lambda \in \cE(\bL^{*(\dot{w}_2F)^*}, 1)^{\bullet}$, then $J_{s^r}^{\bL}({^\sigma \xi}) = {^\sigma \lambda}$.  If $R_{\bL^{\dot{w}_2 F}}^{\bG^F}(\xi) = \chi$, then $R_{\bL^{\dot{w}_2 F}}^{\bG^F}({^\sigma \xi}) = {^\sigma \chi}$, and if 
$$ R_{\bL^{*(\dot{w}_2 F)^*}}^{C_{\bG^*}(s)^{(\dot{w}_1 F)^*}}(\lambda) = \psi, \quad \text{ then } \quad R_{\bL^{*(\dot{w}_2 F)^*}}^{C_{\bG^*}(s)^{(\dot{w}_1 F)^*}}({^\sigma \lambda}) = {^\sigma \psi}.$$
We know $J_s(\chi) = \psi$, and thus $\mu_{s^r}({^\sigma \chi}) = {^\sigma \psi}$.  The diagram in property (5) therefore commutes when $J_{s^r}^{\bL}$ is on the bottom row and $\mu_{s^r}$ is on the top row, and it follows that property (5) holds for $\mu_{s^r}$.

For property (6), let $\varphi: (\bG, F) \rightarrow (\bG_1, F_1)$ be an epimorphism with kernel a central torus, and $\varphi^*: (\bG_1^*, F_1^*) \rightarrow (\bG^*, F^*)$ the induced map between the dual groups.  Let $\chi_1 \in \cE(\bG_1^{F_1}, s_1)$, ${^\top \varphi}(\chi_1) = \chi \in \cE(\bG^F, s)$, with $J_s(\chi) = \psi$ and $J_{s_1}(\chi_1) = \psi_1$.  By applying property (6) to $J_s$ and $J_{s_1}$, we have ${^\top \varphi^*}(\psi) = \psi_1$.  We have ${^\sigma \psi_1} = {^\sigma ({^\top \varphi^*}(\psi))} =  {^\sigma \psi(\varphi^*)}$.  Thus, ${^\top \varphi^*}({^\sigma \psi}) = {^\sigma \psi_1}$.  We also have $\varphi(s^r) = s_1^r$, and ${^\top \varphi}({^\sigma \chi_1}) = {^\sigma \chi_1}(\varphi) = {^\sigma \chi}$.  Now, the diagram from property (6) is commutative when we have $\mu_{s_1^r}$ in the bottom row and $\mu_{s^r}$ in the top row, as desired.

For the final property (7), suppose $\chi = \prod_i \chi_i$ and $J_s(\chi) = \psi = \prod_i \psi_i$, with $s = \prod_i s_i$ and $J_{s_i}(\chi_i) = \psi_i$.  Then 
$$\mu_{\prod_i s_i^r}({^\sigma \chi)} = \mu_{s^r}({^\sigma \chi}) = {^\sigma \psi} = \prod_i {^\sigma \psi_i} =  \prod_i \mu_{s^r} ({^\sigma \chi_i}).$$
Since all properties from Theorem \ref{UniqueJord} hold for the map $\mu_{s^r}$, we must have $\mu_{s^r} = J_{s^r}$, and it follows that if $J_s(\chi) = \psi$, then $J_{s^r}({^\sigma \chi}) = {^\sigma \psi}$.
\end{proof}

The following is our main application of Theorem \ref{MainThm}, which allows us to reduce the problem of computing the field of character values of an irreducible character of $\bG^F$ to the question of conjugacy of powers of semisimple elements, and the computation of the actions of group and Galois automorphisms on unipotent characters.

\begin{corollary} \label{MainCor}  Let $L$ be any subfield of $\overline{\QQ}_{\ell}$ (or of $\CC$ if these are identified), let $m = \ex(\bG^F) = \ex(\bG^{*F^*})$, and $K = \QQ(\zeta_m) \cap L$.  For any $\sigma \in \Gal(\QQ(\zeta_m)/K)$, let $r_{\sigma}  \in \ZZ$ such that $(r_{\sigma}, m) = 1$ and $\sigma(\zeta_m) = \zeta_m^{r_{\sigma}}$.

Suppose $Z(\bG)$ is connected, and let $\chi$ be an irreducible character of $\bG^F$, where $s_0 \in \bG^{*F^*}$ is semisimple, $\chi \in \cE(\bG^F, s_0)$, and $J_{s_0}(\chi) = \nu$.  Then $\QQ(\chi) \subseteq L$ if and only if the following hold for every $\sigma \in \Gal(\QQ(\zeta_m)/K)$:
\begin{enumerate}
\item[(i)] The element $s_0$ is $\bG^{*F^*}$-conjugate to $s_0^{r_{\sigma}}$.
\item[(ii)] If $h_0 \in \bG^{*F^*}$ satisfies $h_0 s_0 h_0^{-1} = s_0^{r_{\sigma}}$, then ${^{h_0} \nu} = {^\sigma \nu}$.
\end{enumerate}
\end{corollary}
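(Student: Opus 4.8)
The plan is to translate the containment $\QQ(\chi)\subseteq L$ into Galois fixity of $\chi$ and then read off the Galois orbit of $\chi$ directly from its Jordan decomposition via Theorem \ref{MainThm}. First I would reduce to fixity: every irreducible character of $\bG^F$ takes values in $\QQ(\zeta_m)$ by Proposition \ref{expon}, so $\QQ(\chi)\subseteq L$ holds if and only if $\QQ(\chi)\subseteq\QQ(\zeta_m)\cap L=K$, and since $\QQ(\zeta_m)/\QQ$ is abelian, the Galois correspondence turns this into the condition that ${}^\sigma\chi=\chi$ for every $\sigma\in\Gal(\QQ(\zeta_m)/K)$ (each such $\sigma$ being lifted to $\overline{\QQ}$ to act on characters, the action depending only on $\sigma|_{\QQ(\zeta_m)}$). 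Thus it suffices to prove, for a fixed $\sigma$ with $\sigma(\zeta_m)=\zeta_m^{r_\sigma}$, that ${}^\sigma\chi=\chi$ is equivalent to conditions (i) and (ii) holding for that $\sigma$.

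Next I would apply Theorem \ref{MainThm}: ${}^\sigma\chi\in\cE(\bG^F,s_0^{r_\sigma})$ and $J_{s_0^{r_\sigma}}({}^\sigma\chi)={}^\sigma\nu$. Here $C_{\bG^*}(s_0^{r_\sigma})=C_{\bG^*}(s_0)$, since $(r_\sigma,m)=1$ and $s_0$ has order dividing $m$, so $s_0^{r_\sigma}$ generates the same cyclic subgroup as $s_0$; hence ${}^\sigma\nu$ is again a unipotent character of $C_{\bG^*}(s_0)^{F^*}$ (unipotent by Lemma \ref{Galseries} applied inside that group). Since distinct rational Lusztig series are disjoint, ${}^\sigma\chi=\chi$ forces $\cE(\bG^F,s_0)=\cE(\bG^F,s_0^{r_\sigma})$, which is precisely the $\bG^{*F^*}$-conjugacy of $s_0$ and $s_0^{r_\sigma}$, that is, condition (i). So (i) is necessary, and whenever it holds both ${}^\sigma\chi$ and $\chi$ lie in $\cE(\bG^F,s_0)$, so that ${}^\sigma\chi=\chi$ is equivalent to $J_{s_0}({}^\sigma\chi)=\nu$.

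Assuming (i), fix $h_0\in\bG^{*F^*}$ with $h_0 s_0 h_0^{-1}=s_0^{r_\sigma}$. The crux of the argument is the equivariance of the Jordan decomposition under conjugation, namely $J_{s_0^{r_\sigma}}(\lambda)={}^{h_0}(J_{s_0}(\lambda))$ for all $\lambda\in\cE(\bG^F,s_0)$, which I would deduce from Lemma \ref{ConjJ} after transporting everything to the reference torus: writing $s_0=gsg^{-1}$ with $s\in\bT^{*(w_1 F)^*}$, one has $s_0^{r_\sigma}=gs^{r_\sigma}g^{-1}$ with $s^{r_\sigma}\in\bT^{*(w_1 F)^*}$ and $C_{\bG^*}(s^{r_\sigma})=C_{\bG^*}(s)$, the element $g^{-1}h_0 g$ conjugates $s$ to $s^{r_\sigma}$, and after correcting it on the left by a suitable element of $C_{\bG^*}(s)$ one may take the conjugator in $\Nr_{\bG^*}(\bT^*)^{(\dot w_1 F)^*}$ (both $\bT^*$ and its conjugate being maximal tori of $C_{\bG^*}(s)$, and $h_0$ being $F^*$-fixed), which is exactly the hypothesis of Lemma \ref{ConjJ} --- the same reduction used in \cite{SrVi15}. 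Granting this, $J_{s_0}({}^\sigma\chi)={}^{h_0^{-1}}(J_{s_0^{r_\sigma}}({}^\sigma\chi))={}^{h_0^{-1}}({}^\sigma\nu)$, so (using $J_{s_0}(\chi)=\nu$ and injectivity of $J_{s_0}$) we get ${}^\sigma\chi=\chi$ if and only if ${}^\sigma\nu={}^{h_0}\nu$. Finally, any other element realizing $h_0 s_0 h_0^{-1}=s_0^{r_\sigma}$ differs from $h_0$ by an element of $C_{\bG^*}(s_0)^{F^*}$, which acts on $C_{\bG^*}(s_0)^{F^*}$ by an inner automorphism and hence fixes every character; so ${}^{h_0}\nu$ depends only on $s_0$ and $s_0^{r_\sigma}$, and the equivalence just obtained is precisely condition (ii). Combining the two directions over all $\sigma$ proves the corollary.

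I expect the equivariance identity $J_{s_0^{r_\sigma}}={}^{h_0}\circ J_{s_0}$ to be the only genuine obstacle; once the passage to the reference torus is set up carefully --- in particular tracking which conjugating elements can be chosen $(\dot w_1 F)^*$-fixed so that Lemma \ref{ConjJ} applies, and checking the resulting condition is independent of the choice of $h_0$ --- the remainder is formal Galois theory together with the partition of $\mathrm{Irr}(\bG^F)$ into rational Lusztig series.
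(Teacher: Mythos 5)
Your proposal is correct and follows essentially the same route as the paper's proof: reduce to $\sigma$-fixity via Galois theory, invoke Theorem \ref{MainThm} and the partition into Lusztig series to force condition (i), then transport to the reference torus and apply Lemma \ref{ConjJ} together with \cite[Lemma 2.2]{SrVi15} to compare $J_{s^{r_\sigma}}$ with $J_s$, and finally note that the choice of conjugating element only matters modulo $C_{\bG^*}(s_0)^{F^*}$, which acts by inner automorphisms. Packaging the key step as the equivariance identity $J_{s_0^{r_\sigma}} = {}^{h_0}\circ J_{s_0}$ is a mild reorganization of the same computation, not a different argument.
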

\begin{proof} In the notation of Theorem \ref{MainThm}, we suppose that $\chi \in \cE(\bG^F, s)$ with $s \in \bT^*$ and $W_F(s)$ nonempty (so $s$ corresponds to $s_0$).  First, consider some $h \in \bG^{*(\dot{w}_1 F)^*}$ which normalizes $C_{\bG^*}(s)^{(\dot{w}_1 F)^*}$.  The automorphism of $C_{\bG^*}(s)^{(\dot{w}_1 F)^*}$ given by conjugation by $h$ permutes the unipotent characters $\cE(C_{\bG^*}(s)^{(\dot{w}_1 F)^*}, 1)$, by \cite[(1.27)]{BrMaMi93} for example.  If $hsh^{-1} = s^r$ for some $r \in \ZZ$ such that $(r,m) = 1$, then $h$ normalizes $C_{\bG^*}(s)^{(\dot{w}_1F)^*} = C_{\bG^*}(s^r)^{(\dot{w}_1F)^*}$.  Any other element $h_1 \in \bG^{*(\dot{w}_1 F)^*}$ satisfying $h_1 s h_1^{-1} = s^r$ must have the property that $h_1 \in hC_{\bG^*}(s)^{(\dot{w}_1F)^*}$.  This implies that the action on $\cE(C_{\bG^*}(s)^{(\dot{w}_1 F)^*}, 1)$ given by conjugation by an element $h$ such that $h s h^{-1} = s^r$, is independent of the choice of $h$.

Suppose that $\QQ(\chi) \subseteq L$, which is equivalent to ${^\sigma \chi} = \chi$ for each $\sigma \in \Gal(\QQ(\zeta_m)/K)$ since $\QQ(\chi) \subseteq \QQ(\zeta_m)$.  Then for each $\sigma$, $\chi = {^\sigma \chi} \in \cE(\bG^F, s^{r_{\sigma}})$ by Lemma \ref{Galseries}.  So $\cE(\bG^F, s) = \cE(\bG^F, s^{r_{\sigma}})$, which is equivalent to condition (i).  Since $s, s^{r_{\sigma}} \in \bT^{*(w_1 F)^*}$, then by \cite[Lemma 2.2]{SrVi15} we have $\dot{v} s \dot{v}^{-1} = s^{r_{\sigma}}$ for some $\dot{v} \in \mathrm{N}_{\bG^*}(\bT^*)^{(\dot{w}_1 F)^*}$.  By Lemma \ref{ConjJ}, if $J_s(\chi) = \psi$, then we have $J_{^{\dot{v}} s}(\chi) = {^{\dot{v}} \psi}$.  Since $J_{s^{r_{\sigma}}}(\chi) = {^\sigma \psi}$ and ${^{\dot{v}} s} = s^{r_{\sigma}}$, then ${^{\dot{v}} \psi} = {^\sigma \psi}$.  By the previous paragraph, we also then have ${^h \psi} = {^\sigma \psi}$, which gives condition (ii).

Conversely, suppose that conditions (i) and (ii) hold for each $\sigma$.  That is, for each $\sigma$ we have $h s h^{-1} = s^{r_{\sigma}}$ for some $h \in \bG^{*(\dot{w}_1 F)^*}$, and if $J_{s}(\chi) = \psi$, then ${^h \psi} = {^\sigma \psi}$.  Again by \cite[Lemma 2.2]{SrVi15}, there is some $\dot{v} \in \mathrm{N}_{\bG^*}(\bT^*)^{(\dot{w}_1 F)^*}$ such that ${^{\dot{v}} s} = s^{r_{\sigma}}$, and so also ${^{\dot{v}} \psi} = {^\sigma \psi}$.  We have $J_{s^{r_{\sigma}}}({^\sigma \chi}) = {^\sigma \psi}$ by Theorem \ref{MainThm}, and $J_{^{\dot{v}} s}(\chi) = {^{\dot{v}} \psi}$ by Lemma \ref{ConjJ}.  Now $J_{s^{r_{\sigma}}}({^\sigma \chi}) =  J_{s^{r_{\sigma}}} (\chi)$, and thus ${^\sigma \chi} = \chi$.  Since this holds for every $\sigma \in \Gal(\QQ(\zeta_m)/K)$, we have $\QQ(\chi) \subseteq K \subseteq L$.
\end{proof} 

We conclude with a criterion for an irreducible character $\chi$ of $\bG^F$ to be rational-valued.  Recall that an element $g$ of a finite group $G$ is \emph{rational} if, for every $r \in \ZZ$ such that $(|g|, r)=1$, $g$ and $g^r$ are conjugate in $G$, see \cite[Section 5]{NaTi08} for example.  It follows from Lemma \ref{Galseries} that if $\chi$ has Jordan decomposition $(s_0, \nu)$, and $s_0$ is not rational in $\bG^{*F^*}$, then $\chi$ is not rational-valued.  The following gives a partial converse to this statement.  Note that there are many unipotent characters which are rational-valued \cite{Lu02} and many which are invariant under group automorphisms \cite[Proposition 3.7]{Ma07}.

\begin{corollary} \label{LastCor}  Suppose $Z(\bG)$ is connected, and $\chi$ is an irreducible character of $\bG^F$ with Jordan decomposition $(s_0, \nu)$.  If $s_0$ is rational in $\bG^{*F^*}$, and $\nu$ is both rational-valued and invariant under the automorphism group of $C_{\bG^*}(s_0)^{F^*}$, then $\chi$ is rational-valued.
\end{corollary}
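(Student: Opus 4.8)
The plan is to obtain this as the special case $L = \QQ$ of Corollary \ref{MainCor}. With that choice, $K = \QQ(\zeta_m) \cap \QQ = \QQ$, so $\Gal(\QQ(\zeta_m)/K)$ is the full group $\Gal(\QQ(\zeta_m)/\QQ)$, and the conclusion $\QQ(\chi) \subseteq \QQ$ of that corollary is precisely the assertion that $\chi$ is rational-valued. So it suffices to verify that conditions (i) and (ii) of Corollary \ref{MainCor} hold for every $\sigma \in \Gal(\QQ(\zeta_m)/\QQ)$, writing $r = r_\sigma$ for the integer with $(r,m) = 1$ and $\sigma(\zeta_m) = \zeta_m^r$.

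First I would dispatch condition (i). Since $|s_0|$ divides the exponent $m = \ex(\bG^{*F^*})$, any $r$ with $(r,m) = 1$ also satisfies $(r,|s_0|) = 1$; as $s_0$ is rational in $\bG^{*F^*}$ by hypothesis, $s_0$ and $s_0^r$ are $\bG^{*F^*}$-conjugate, which is exactly condition (i). Next I would check condition (ii). Fix $h_0 \in \bG^{*F^*}$ with $h_0 s_0 h_0^{-1} = s_0^r$. Because $(r,|s_0|) = 1$ we have $\langle s_0^r \rangle = \langle s_0 \rangle$, hence $C_{\bG^*}(s_0^r) = C_{\bG^*}(s_0)$, so conjugation by $h_0$ stabilizes $C_{\bG^*}(s_0)$, and therefore its group of $F^*$-fixed points $C_{\bG^*}(s_0)^{F^*}$ (since $h_0$ commutes with $F^*$). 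Thus $c \mapsto h_0 c h_0^{-1}$ is an automorphism of the finite group $C_{\bG^*}(s_0)^{F^*}$; since $\nu$ is assumed invariant under every automorphism of that group, ${}^{h_0}\nu = \nu$. On the other hand $\nu$ is rational-valued, hence fixed by all of $\Gal(\QQ(\zeta_m)/\QQ)$, so ${}^\sigma \nu = \nu$. Combining yields ${}^{h_0}\nu = {}^\sigma\nu$, which is condition (ii).

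Having verified (i) and (ii) for every $\sigma$, Corollary \ref{MainCor} gives $\QQ(\chi) \subseteq \QQ$, i.e.\ $\chi$ is rational-valued. Under the stated hypotheses there is no genuine obstacle: both conditions of Corollary \ref{MainCor} collapse to elementary facts about cyclic groups, and the only point meriting a moment's care is the identity $C_{\bG^*}(s_0^r) = C_{\bG^*}(s_0)$ for $(r,|s_0|) = 1$, which is what guarantees that conjugation by $h_0$ is an honest automorphism of the centralizer, rather than merely an isomorphism onto a conjugate subgroup, so that the automorphism-invariance of $\nu$ can be invoked.
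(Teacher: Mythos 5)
Your proposal is correct and follows essentially the same route as the paper: both deduce the result from Corollary \ref{MainCor} by checking condition (i) via rationality of $s_0$ (using that $|s_0|$ divides $m$) and condition (ii) via the rational-valuedness and automorphism-invariance of $\nu$. Your extra observation that $C_{\bG^*}(s_0^{r}) = C_{\bG^*}(s_0)$, so that conjugation by $h_0$ genuinely is an automorphism of $C_{\bG^*}(s_0)^{F^*}$, just makes explicit a point the paper leaves implicit when it says such conjugations are among the automorphisms of the centralizer.
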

\begin{proof}  If $s_0$ is rational, then for every $r \in \ZZ$ such that $(|s_0|, r)=1$, we have $hs_0h^{-1} = s_0^r$ for some $h \in \bG^{*F^*}$.  For any $\sigma \in \Gal(\overline{\QQ}/\QQ)$, we have ${^\sigma \nu} = \nu$ since $\nu$ is rational-valued, and since $\nu$ is invariant under automorphisms of $C_{\bG^*}(s_0)^{F^*}$, which includes conjugation by elements in $\bG^{*F^*}$, we have ${^h \nu} = \nu = {^\sigma \nu}$.  It follows from Corollary \ref{MainCor} that $\chi$ is rational-valued.
\end{proof}

\noindent {\bf Remark. }  Through conversations with Jay Taylor, it appears that the uniqueness statement for the Jordan decomposition map in Theorem \ref{UniqueJord} might be generalized to Lusztig series $\cE(\bG^F, s)$ such that $C_{\bG^*}(s)$ is connected (or, more generally, if $C_{\bG^*}(s_0)^{F^*} = (C_{\bG^*}(s_0)^{\circ})^{F^*}$, as suggested by Gunter Malle), while $Z(\bG)$ is not necessarily connected.  If this holds, then the results of this section immediately generalize to this situation.


\begin{thebibliography}{10}
\bibitem{BoMi11}
C.~Bonnaf\'e and J.~Michel, Computational proof of the Mackey formula for $q > 2$, \emph{J. Algebra} \textbf{327} (2011), 506--526.

\bibitem{BrMaMi93}
M.~Brou\'e, G.~Malle, and J.~Michel, Generic blocks of finite reductive groups, Repr\'esentations unipotentes g\'en\'eriques et blocs des groupes r\'eductifs finis, \emph{Ast\'erisque} \textbf{No. 212} (1993), 7-92.

\bibitem{CaEn04}
M.~Cabanes and M.~Enguehard, Representation theory of finite reductive groups, New Mathematical Monographs, 1, Cambridge University Press, Cambridge, 2004.


\bibitem{Ca85}
R.~Carter, Finite groups of Lie type. Conjugacy classes and complex characters,  Pure and Applied Mathematics (New York), John Wiley \& Sons, Inc., New York, 1985.

\bibitem{CuRe81}
C. W.~Curtis and I.~Reiner, Methods of representation theory, Vol. I, with applications to finite groups and orders, Pure and Applied Mathematics (New York), John Wiley \& Sons, Inc., New York, 1981.

\bibitem{dellusz}
P.~Deligne and G.~Lusztig, Representations of reductive groups over finite fields, \emph{Ann. of Math. (2)} \textbf{103} (1976), no. 1, 103--161.

\bibitem{DiMi85}
F.~Digne and J.~Michel, Fonctions $\mathcal{L}$ des vari\'et\'es de Deligne-Lusztig et descente de Shintani, in: 

\bibitem{DiMi90}
F.~Digne and J.~Michel, On Lusztig's parametrization of characters of finite groups of Lie type, \emph{Ast\'erisque} \textbf{No. 181-182} (1990), 6, 113--156.

\bibitem{dmbook}
F.~Digne and J.~Michel, Representations of finite groups of Lie type, London Mathematical Society Student Texts, 21, Cambridge University Press, Cambridge, 1991.

\bibitem{Ge03}
M.~Geck, Character values, Schur indices, and character sheaves, \emph{Represent. Theory} \textbf{7} (2003), 19--55.

\bibitem{Ge17}
M.~Geck, A first guide to the character theory of finite groups of Lie type, {\tt arXiv:1705.05083v4}, 2017.

\bibitem{GeMa03}
M.~Geck and G.~Malle, Fourier transforms and Frobenius eigenvalues for finite Coxeter groups, \emph{J. Algebra} \textbf{260} (2003), 162--193.

\bibitem{Lu76}
G.~Lusztig, Coxeter orbits and eigenspaces of Frobenius, \emph{Invent. Math.} \textbf{38} (1976), 101--159.

\bibitem{Lu78}
G.~Lusztig, Representations of finite Chevalley groups, Expository lectures from the CBMS Regional Conferences held at Madison, Wis., August 8-12, 1977, CBMS Conference Series in Mathematics, 39, \emph{American Mathematical Society, Providence, R.I.}, 1978.

\bibitem{Lu84}
G.~Lusztig, Characters of reductive groups over a finite field, Annals of Mathematics Studies, 107, Princeton University Press, Princeton, NJ, 1984.

\bibitem{Lu88}
G.~Lusztig, On the representations of reductive groups with disconnected centre.  Orbites unipotentes et repr\'esentations, I, \emph{Ast\'erisque} \textbf{No. 168} (1988), 10, 157--166.

\bibitem{Lu02}
G.~Lusztig, Rationality properties of unipotent representations, \emph{J. Algebra} \textbf{258} (2002), no. 1, 1--22.

\bibitem{Lu15}
G.~Lusztig, Restriction of a character sheaf to conjugacy classes, \emph{Bull. Math. Soc. Sci. Math. Roumanie (N.S.)} \textbf{58(106)} (2015), no. 3, 297--309.

\bibitem{Ma07}
G.~Malle, Height $0$ characters of finite groups of Lie type, \emph{Represent. Theory} \textbf{11} (2007), 192--220.

\bibitem{Ma17}
G.~Malle, Cuspidal characters and automorphisms, \emph{Adv. Math.} \textbf{320} (2017), 887--903.

\bibitem{MaTe11}
G.~Malle and D.~Testerman, Linear algebraic groups and finite groups of Lie type, Cambridge Studies in Advanced Mathematics, 133, Cambridge University Press, Cambridge, 2011.

\bibitem{Na04}
G.~Navarro, The McKay conjecture and Galois automorphisms, \emph{Ann. of Math. (2)} \textbf{160} (2004), no. 3, 1129--1140.

\bibitem{NaTi08}
G.~Navarro and P.H.~Tiep, Rational irreducible characters and rational conjugacy classes in finite groups, \emph{Trans. Amer. Math. Soc.} \textbf{360} (2008), no. 5, 2443--2465.

\bibitem{ScTa18}
A. A. Schaeffer Fry and J. Taylor, On self-normalising Sylow 2-subgroups in type A, \emph{J. Lie Theory} \textbf{28} (2018), no. 1, 139--168.

\bibitem{ScVi18}
A. A. Schaeffer Fry and C. R. Vinroot, Fields of character values of finite special unitary groups, {\tt arXiv:1803.03309}, to appear in \emph{Pacific J. Math.}

\bibitem{Sp98}
T.A.~Springer, Linear algebraic groups, Second edition, Progress in Mathematics, 9, Birkh\"auser Boston, Inc., Boston, MA, 1998.

\bibitem{SrVi15}
B.~Srinivasan and C. R.~Vinroot, Jordan decomposition and real-valued characters of finite reductive groups with connected center, \emph{Bull. Lond. Math. Soc.} \textbf{47} (2015), no. 3, 427--435.

\bibitem{St65}
R.~Steinberg, Regular elements of semi-simple algebraic groups, \emph{Publ. Math. Inst. Hautes \'Etudes Sci.} \textbf{25} (1965), 49--80.

\bibitem{Tay18}
J. Taylor, On the Mackey formula for connected centre groups, \emph{J. Group Theory} \textbf{21} (2018), no. 3, 439--448.

\bibitem{Te95}
D.M.~Testerman, $A_1$-type overgroups of elements of order $p$ in semisimple algebraic groups and the associated finite groups, \emph{J. Algebra} \textbf{177} (1995), no. 1, 34--76.

\end{thebibliography}
\end{document}